\NewDocumentCommand{\cycle}{ O{\;} m }
 {
  (
  \alec_cycle:nn { #1 } { #2 }
  )
 }
\DeclareMathOperator{\Aut}{Aut}
\DeclareMathOperator{\rk}{rk}
\DeclareMathOperator{\PG}{PG}
\newcommand{\Z}{\mathbb{Z}}
\newcommand{\Zn}{\mathbb{Z}_n}
\newcommand{\F}{\mathbb{F}}
\newcommand{\mC}{\mathcal{C}}
\newcommand{\mI}{\mathcal{I}}
\newcommand{\mM}{\mathcal{M}}
\newcommand{\mB}{\mathcal{B}}
\newcommand{\Orb}{\mathrm{Orb}}
\newcommand{\Stab}{\mathrm{Stab}}
\newcommand{\sh}{\mathrm{sh}}
\DeclarePairedDelimiter\card{\lvert}{\rvert}
\renewcommand{\a}{\alpha}
\newcommand{\fqn}{{\mathbb{F}_{q^n}}}
\newcommand{\fq}{{\mathbb{F}_q}}
\newtheorem{theorem}{Theorem}
\newtheorem{problem}{Problem}
\newtheorem{lemma}{Lemma}
\newtheorem{proposition}{Proposition}
\newtheorem{corollary}{Corollary}
\theoremstyle{definition}
\newtheorem{definition}{Definition}
\theoremstyle{remark}
\newtheorem{remark}{Remark}
\newtheorem{example}{Example}
\title{On Cyclic Matroids and their  Applications}
\author[1]{Gianira N. Alfarano}
\affil[1]{Institute of Mathematics, University of Zurich, Switzerland}
\author[2]{Karan Khathuria}
\affil[2]{Institute of Computer Science, University of Tartu}
\author[1]{Simran Tinani}
\date{}
\begin{document}

\maketitle

\begin{abstract}
A matroid is a combinatorial structure that captures and generalizes the algebraic concept of linear independence under a broader and more abstract framework. Matroids are closely related  with many other topics in discrete mathematics, such as graphs, matrices, codes and projective geometries. In this work, we define cyclic matroids as matroids over a ground set of size $n$ whose automorphism group contains an $n$-cycle. We study the properties of such matroids, with special focus on the minimum size of their basis sets. For this, we broadly employ two different approaches: the multiple basis exchange property, and an orbit-stabilizer method, developed by analyzing the action of the cyclic group of order $n$ on the set of bases. We further present some applications of our theory to algebra and geometry, presenting connections to cyclic projective planes, cyclic codes and $k$-normal elements.
\end{abstract}

\section{Introduction}

Matroids are versatile combinatorial structures known to have close ties with other objects in discrete mathematics, such as graphs, matrices, codes and projective geometries.
In this paper, we introduce the family of \emph{cyclic matroids} as matroids over a ground set of size $n$ whose automorphism group contains a cyclic subgroup of order $n$. We show that these matroids are highly pertinent to the study of cyclic projective planes, cyclic codes and $k$-normal elements over finite fields. 

We describe the properties of cyclic matroids, focusing our attention on the size of their basis sets. Counting the bases of matroids is a common problem in combinatorics and in particular in optimization; see  \cite{lomeli1996randomized, gimenez2005number, snook2012counting, pendavingh2018number, guo2021approximately} to mention but a few contributions. In full generality, giving an estimate of the number of bases of a matroid is a difficult problem. Indeed, its exact computational complexity is still only partially understood. Depending on the class of matroids in question, the exact counting problem may be polynomial time, NP-complete or unsolved. For example, it is NP-complete to count the number of bases of transversal matroids and bicircular matroids \cite{colbourn1995complexity, gimenez2006complexity}.

We show that the defining feature of a cyclic matroid enforces the presence of certain types, and therefore a threshold number, of basis elements. Let $\mM$ be a cyclic matroid with ground set $\{0,1,\ldots, n-1\}$ and rank $k$. We prove, in particular, that the subset $B_0=\{0,1,\ldots,k-1\}$ is always a basis for $\mM$. Further, we provide some lower bounds on the number of bases of $\mM$. For this purpose, we employ two different approaches. On the one hand we use the \emph{basis exchange property} on the cyclic shifts of $B_0$, i.e., $B_i=\{i,i+1,\ldots,i+k-1\}$. On the other hand, we use the fact that the basis set $\mathcal{B}$ is closed under the action of the cyclic group $\Z_n$ of order $n$, and find the minimum number of orbits contained in $\mathcal{B}$. 
We also study the action of $\Z_n$ on the power set $2^{\Z_n}$ in more generality and provide a formula for the number of orbits. 
While this group action has been studied for different purposes (see for instance \cite{alon1989combinatorial, radcliffe2006reconstructing, simon2018combinatorial, mnukhin1992k}), to the best of our knowledge, neither the questions we address about it, like the exact number of orbits, nor its connection to matroids, has been investigated before.

 We finally describe the connections of cyclic matroids to well-known structures in algebra and geometry. We observe that every cyclic code of length $n$ and dimension $k$ gives rise to a representable cyclic matroid of rank $k$ and ground set $\{0,1,\dots,n-1\}$ and, more generally, the incidence matrix of every cyclic projective plane $\PG(2,q)$ can be represented as a binary cyclic matroid over a ground set of size $q^2+q+1$, and rank depending on $q$. Furthermore, we  establish and explain a new connection between $k$-normal elements of $\F_{q^n}$ and cyclic matroids of rank $(n-k)$ and ground set $\{0,1,\ldots,n-1\}$. 
However, all these connections leave open a lot of questions; in particular it is not clear yet if representable cyclic matroids are always represented by cyclic codes or $k$-normal elements. We leave those problems for further investigation. 

 The paper is structured as follows. In Section \ref{sec:background}, we give some preliminary background on matroids, finite projective geometry, linear codes, and $k$-normal elements. In Section \ref{sec:cyclic-matroids}, we introduce cyclic matroids and study the structure and properties of their basis sets. To this end, we use the multiple basis exchange property to provide different lower bounds on the number of bases of cyclic matroids. Then, we study the orbits and the stabilizers of the group action of $\Z_n$ on the set $2^{\Z_n}$, and then apply this analysis to the bases of cyclic matroids, to obtain another lower bound. Since the formulas for these bounds are obtained using different approaches, they are not directly comparable, and one may exceed the other depending on the relationship between $n$ and $k$. Therefore, we provide and compare some concrete values of all of our calculated bounds for different values of $n$ and $k$.
 In Section \ref{sec:examples}, we explain the link between cyclic projective planes, cyclic codes and $k$-normal elements and cyclic matroids. These links indicate that the class of cyclic matroids deserves to be further studied from different points of view, and may hold powerful potential to uncover results on related algebraic, combinatorial and geometric objects. In Section \ref{sec:numberoforbits}, we study the cyclic group action of $\Z_n$ on the set $2^{\Z_n}$ in generality, and characterize and exactly count the orbits. Finally, in \ref{appendix:examples} we provide some concrete, non-trivial examples of cyclic matroids found by computer search.

\subsection*{Notation}
Let $E$ be a finite set. We denote with $2^E$ the set of all subsets of $E$. The cardinality of $E$ is denoted by $|E|$. Let $A\subseteq E$ be any subset. We denote by $A^C$ the complement set of $A$ in $E$.
Given a positive integer $n$, let $\Zn$ denote the set of integers modulo $n$. By abuse of notation, we will use the same symbols for the integer $a$ and its residue class $a \mod n$, and perform arithmetic modulo $n$, unless stated otherwise. As is standard, the elements of $\Z_n$ are represented by integers $0\leq a\leq n-1$, and so $\Z_n$ inherits the ordering on $\Z$. We denote by $\mathcal{S}_n$ the symmetric group on $n$ symbols, and by the cycle $\cycle{0, 1,  \ldots, n-1}$ the permutation $ 0\mapsto 1, \; 1\mapsto 2, \; \ldots, \; (n-2)\mapsto (n-1), \; (n-1)\mapsto 0$.

\section{Background} \label{sec:background}
In this section, we provide some useful background for the rest of the paper. We first briefly recall what a matroid is; later we introduce other combinatorial and algebraic structures, such as projective planes, linear codes and $k$-normal elements, which are closely related to the main object of study of this paper, namely \emph{cyclic matroids}. These relations will be explained in Section~\ref{sec:examples}.

\subsection{Matroids}
 We first recall some basic definitions of matroid theory that we will be using throughout the paper. For a detailed treatment on matroids we refer the interested reader to \cite{oxley2006matroid}. 
 
 \begin{definition}
 A \textbf{matroid} $\mM$ is a pair $(E,\mI)$, where $E$ is a finite set and $\mI$  is a subset of $2^E$ satisfying the following properties:
\begin{itemize}
    \item[(I1)] $\emptyset \in \mI$.
    \item[(I2)] If $I \in \mI$ and $J \subseteq I$, then $J \in \mI$.
    \item[(I3)] If $I,J \in \mI$ and $|I| < |J|$, then there is an element $e \in J \setminus I$ such that $I \cup \{e\} \in \mI$.
\end{itemize}
The elements of $\mI$ are called the \textbf{independent sets} of $\mM$, and the elements outside $\mI$ are called the \textbf{dependent sets} of $\mM$. A maximal (with respect to inclusion) independent set in $\mI$ is called a \textbf{basis} of $\mM$. 
 \end{definition}
 
 Let $\mB$ is the set of bases of $\mM$, then by (I1) it follows that
\begin{itemize}
    \item[(B1)] $\mB \neq \emptyset$. 
\end{itemize} 
By (I3), it is easy to see that all the bases have the same cardinality, which is called the \textbf{rank} of $\mM$. Moreover, the set of bases $\mB$ satisfies the following property, known as the \emph{basis exchange property}:
\begin{itemize}
    \item[(B2)] If $B_1, B_2 \in \mB$ and $x \in B_1 \setminus B_2$, then there exists an element $y \in B_2 \setminus B_1$ such that $(B_1 \setminus \{x\}) \cup \{y\} \in \mB$. 
\end{itemize}
Using $(B1)$ and $(B2)$, we get an equivalent characterization of a matroid in terms of bases. 
Throughout the paper we define matroids using the bases axioms, and use the notation $\mM=(E, \mB)$ for a matroid with ground set $E$ and basis set $\mB$. 

\begin{example}[Uniform Matroid]\label{ex:uniform_matroid}
Let $E$ be a set of cardinality $n$ and for an integer $k\leq n$ denote by $\mI$ the collection of subsets of $E$ with at most $k$ elements and by $\mB$ the collection of subsets of $E$ with exactly $k$ elements. It is not difficult to verify that $\mI$ satisfies the properties (I1)--(I3) and $\mB$ satisfies the properties (B1)--(B2), hence then the pair $(E,\mB)$ defines a matroid of rank $k$, denoted by $U_{n,k}$ and called \textbf{uniform matroid}.
\end{example}
Below, we state an equivalent formulation of the basis exchange property given in \cite{greene1973multiple}, which we will use many times in the paper.

\begin{lemma}[Multiple Exchange Property] \label{lem:basis_exch} Let $\mM= (E,\mB)$ be a matroid on a ground set $E$ and $\mB$ be its collection of bases. Further, let $B_1, B_2 \in \mathcal{B}$, and $Q \subset B_1 \setminus B_2$. Then there exists a subset $P \subset B_2 \setminus B_1$ such that $(B_1 \setminus Q) \cup P \in \mathcal{B}$.
\end{lemma}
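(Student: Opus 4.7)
The plan is to prove the Multiple Exchange Property by induction on $|Q|$, using the standard (single-element) basis exchange axiom as the engine. This is the approach taken by Greene in the original paper, and it seems the most direct: at each step we peel off one element of $Q$ and apply single-element exchange, keeping careful track of where the newly added element lives.

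First I would dispense with the trivial case $|Q|=0$ by taking $P=\emptyset$, so that $(B_1\setminus Q)\cup P=B_1\in\mB$. For the base case $|Q|=1$, writing $Q=\{q\}$ with $q\in B_1\setminus B_2$, the classical basis exchange axiom produces some $p\in B_2\setminus B_1$ with $(B_1\setminus\{q\})\cup\{p\}\in\mB$, so $P=\{p\}$ does the job.

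For the inductive step, assume the result for all subsets of $B_1\setminus B_2$ of size less than $|Q|=m\geq 2$. Pick any $q\in Q$, set $Q'=Q\setminus\{q\}$, and apply the induction hypothesis to obtain $P'\subseteq B_2\setminus B_1$ with
\[
B':=(B_1\setminus Q')\cup P'\in\mB.
\]
Since $q\in B_1$ and $q\notin Q'$, we have $q\in B'$; since $q\in B_1\setminus B_2$, we have $q\in B'\setminus B_2$. Apply the single-element basis exchange axiom to $B'$ and $B_2$ at $q$: there exists $p\in B_2\setminus B'$ with $(B'\setminus\{q\})\cup\{p\}\in\mB$. Setting $P=P'\cup\{p\}$ and verifying $(B_1\setminus Q)\cup P=(B'\setminus\{q\})\cup\{p\}$ (using $p\notin B_1$ and $q\notin P'$) will close the induction.

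The main obstacle, and essentially the only nontrivial point, is verifying that the new element $p$ really lies in $B_2\setminus B_1$ and is not already in $P'$, so that $P\subseteq B_2\setminus B_1$. For this I would compute
\[
B_2\setminus B' \;=\; B_2\setminus\bigl((B_1\setminus Q')\cup P'\bigr).
\]
If $x\in B_2\cap B_1$, then $x\notin Q'$ (because $Q'\subseteq B_1\setminus B_2$), so $x\in B_1\setminus Q'\subseteq B'$, whence $x\notin B_2\setminus B'$. Hence every $x\in B_2\setminus B'$ lies in $B_2\setminus B_1$, and also $x\notin P'$ by definition. So $p\in(B_2\setminus B_1)\setminus P'$, which gives $P\subseteq B_2\setminus B_1$ with $|P|=|P'|+1$, completing the induction. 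Apart from this set-theoretic bookkeeping, everything else follows immediately from the axioms.
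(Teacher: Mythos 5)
Your induction is correct, and every step checks out: the key set-theoretic point, that $B_2\setminus B'\subseteq B_2\setminus B_1$ because any element of $B_2\cap B_1$ avoids $Q'\subseteq B_1\setminus B_2$ and hence survives into $B'=(B_1\setminus Q')\cup P'$, is exactly what makes the single-element exchange applicable at each stage, and the bookkeeping identity $(B'\setminus\{q\})\cup\{p\}=(B_1\setminus Q)\cup P$ is verified properly using $q\notin P'$ and $p\notin B_1$. Note, however, that the paper does not prove this lemma at all; it simply recalls it as known and cites Greene. Your argument is therefore a genuine addition, and it works precisely because the statement as formulated here is the \emph{one-sided} multiple exchange: only $(B_1\setminus Q)\cup P$ is required to be a basis. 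Greene's actual theorem is the symmetric version, in which the same $P$ must also make $(B_2\setminus P)\cup Q$ a basis, and that stronger statement does not follow from your peel-off induction (the exchanged elements need not reassemble compatibly on the $B_2$ side); so your aside that this is ``the approach taken by Greene'' is inaccurate, though harmless. In short: for the lemma as stated, your elementary induction from the single basis exchange axiom is a complete and self-contained proof, and arguably preferable to a bare citation since the paper only ever uses the one-sided form.
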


\begin{definition}
Let $\mM = (E,\mB)$ be a matroid on the ground set $E$ with bases set $\mB$. An \textbf{automorphism} $\tau$ of $\mM$ is a permutation of $E$ such that $B \in \mB$ if and only if $\tau(B) \in \mB$, where $\tau(B):=\{\tau(b)\mid b\in B\}$. The \textbf{automorphism group} $\Aut(\mM)$ is the group of automorphisms of $\mM$ under composition.
\end{definition}

More generally, given a matroid $\mM = (E,\mB)$, another matroid $\mM' = (E',\mB')$ is said to be \textbf{isomorphic} to $\mM$ if there exists a bijection $\tau : E\rightarrow E'$ such that $\tau(B) \in \mB'$ if and only if $B \in \mB$.

Finally, we recall the notion of dual matroid.

\begin{definition}
Let $\mM=(E,\mB)$ be a matroid with ground set $E$ and collection of bases $\mB$. Let $\mB^\ast=\{B^C \mid B\in\mB\}$. Then $\mB^\ast$ satisfies the axioms (B1) and (B2), hence it is the collection of bases of a matroid $\mM^\ast=(E,\mB^\ast$), called the \textbf{dual matroid} of $\mM$.
\end{definition}

\begin{example}[Representable Matroid]\label{rep-mat}
Let $\F$ be a field, and $A$ be an $m \times n$ matrix over $\F$. We define $E$ to be the index set of the columns of $A$, and $\mI$ to be the collection of subsets of $E$ that correspond to linearly independent sets of columns of $A$. Then, $\mM(A) = (E,\mI)$ is a matroid of rank equal to the rank of $A$, and it is called \textbf{representable matroid}.  A proof can be found in \cite[Theorem 1.1.1]{oxley2006matroid}.
\end{example}

\subsection{(Cyclic) Projective Planes}\label{subsec:projplanes}
In this short subsection, for convenience of the reader, we recall the definition of a (cyclic) projective plane, incidence matrix and collineation group. 

\begin{definition}
A  (point-line) \textbf{incidence structure} is a triple $(\mathcal{P}, \mathcal{L}, \mathcal{I}$) of sets, with $ \mathcal{P}$ called \textbf{points}, $\mathcal{L}$ called \textbf{lines}, and $\mathcal{I}\subseteq \mathcal{P} \times \mathcal{L}$ called the \textbf{incidence relations}. We say that a point $p$ and a line $\ell$ are \textbf{incident} with each other if $(p, \ell)\in \mathcal{I}$, and in this case we write $p\in \ell$. A subset of points is called \textbf{collinear} if they are all incident with the same line.
\end{definition}
\begin{definition}
Let $n$ be an integer.  A  \textbf{projective plane} $\PG(2,n)$ is a (point-line) incidence structure with $n^2+n+1$ points and $n^2+n+1$ lines which satisfy the following axioms:
\begin{enumerate}
    \item Every two points are incident with exactly one line.
    \item Every two lines are incident with exactly one point. 
    \item  There are four points such that no three of them are collinear.
\end{enumerate}
When $n=q$, where $q$ is a prime power, then the points and lines of $\PG(2,q)$ are the one- and two-dimensional subspaces of a vector space of dimension $3$ over the finite field of order $q$, $\F_q$. In this case, the projective plane is called \textbf{Desarguesian}.
\end{definition}

We are only interested in the Desarguesian plane $\PG(2,q)$. It is immediate to see that every line in $\PG(2, q)$ is incident with exactly $q + 1$ points and dually, every point is incident with exactly $q + 1$ lines. The incidence relation of $\PG(2,q)$ can be represented via an \textbf{incidence matrix $A$}, whose rows and columns are indexed by points and lines respectively such that 
$$A_{i,j} = \begin{cases}
1 & \textnormal{ if } P_i\in \ell_j \\
0 & \textnormal{ otherwise },
\end{cases}$$
where for $i,j \in \{1,\dots, q^2+q+1\}$, the $P_i$'s are the points and $\ell_j$'s are the lines of the projective plane. 

\begin{definition}
A \textbf{collineation} of $\PG(2,q)$ is a permutation of the points of $\PG(2,q)$ which preserves their collinearity, i.e. lines are mapped onto lines. The set of collineation forms a group, called \textbf{collineation group}. 
\end{definition}

\begin{definition}
A projective plane $\PG(2,q)$ is called \textbf{cyclic} if its collineation group is transitive on the points of $\PG(2,q)$ and there exists a collineation that generates a cyclic subgroup of order $q^2+q+1$.
\end{definition}

\subsection{Linear Codes}\label{subsec:codes}
This subsection introduces linear codes, with a particular focus on cyclic codes. For a more detailed treatment of the topic we refer the interested reader to \cite{van2012introduction}. Let $n$ be a positive integer, $q$ be a prime power and $\F_q$ be the finite field of order $q$. Let $\F_q^n$ be the $n$-dimensional vector space over $\F_q$.

\begin{definition}
An $[n,k]_q$ \textbf{(linear) code} $\mC$ is a $k$-dimensional $\F_q$-subspace of $\F_q^n$. The vectors in $\mC$ are called \textbf{codewords}. A matrix $G\in\F_q^{k\times n}$ whose rows form a basis for $\mC$ is called a \textbf{generator matrix} for $\mC$. 
\end{definition}

In 1976, Greene \cite{greene} explored several connections between matroids and linear codes. Ever since, many authors have exploited this link and used matroid theory to prove coding theoretic results. It is straightforward to obtain a representable matroid from a linear code $\mC$: given a generator matrix $G$ of $\mC$, we obtain a representable matroid $\mM(G)$ (see Example~\ref{rep-mat}). Note that $\mM(G)$ does not depend on the choice of the generator matrix $G$.

In this work, we are interested in a special class of linear codes, called \emph{cyclic codes}, which are one of the most studied families of codes due to their polynomial representation as ideals of $\F[x]/\langle x^n-1\rangle$. More precisely, they are defined as follows.

\begin{definition}
A code $\mC\subseteq \F_q^n$ is said to be \textbf{cyclic} if for every codeword $c=(c_0,c_1,\dots,c_{n-1})$, also the cyclic shift of $c$, namely $\mathrm{sh}(c)=(c_{n-1},c_0,\dots, c_{n-2})$, is a codeword.
\end{definition}

Consider the following map:
\begin{align*}
    \phi : \F_q^n &\rightarrow \F_q[x]/\langle x^n-1 \rangle \\
    (c_0,\dots, c_{n-1}) &\mapsto c_0+c_1x+\cdots+c_{n-1}x^{n-1}.
\end{align*}
It is easy to see that $\phi$ is an isomorphism of vector spaces and it turns out that $\mC\subseteq\F_q^n$ is a cyclic code if and only if $\phi(\mC)$ is an ideal of $\F_q[x]/\langle x^n-1 \rangle$ (which derives from the fact that $\phi(\sh(c))=x\phi(c)\mod (x^n-1)$). With abuse of notation, we then identify $\mC$ with $\phi(\mC)$ and we say that a cyclic code is an ideal of
$\F_q[x]/\langle x^n-1 \rangle$.

Since $\F_q[x]/\langle x^n-1 \rangle$ is a principal ideal ring, every cyclic code consists
of the multiples of a polynomial $g(x)$ which is the monic polynomial of lowest degree in the ideal. Such a polynomial $g(x)$ is called \textbf{generator polynomial}, it divides $x^n-1$ and if $g(x)$ has degree $n-k$, then the cyclic code that it generates has dimension $k$.

\subsection{$k$-Normal Elements}\label{subsec:knorm}

In this last introductory subsection, we introduce $k$-normal elements. 

Let $q$ denote a prime power, and $\fq$ denote the finite field of order $q$. We are interested in studying elements in a finite extension $\fqn$ of degree $n$ over $\fq$. An element $\a \in \fqn $ is called a \textbf{normal element} over $\fq$ if all its Galois conjugates, i.e. the $n$ elements $\{\alpha, \alpha^q, \ldots, \alpha^{q^{n-1}}\} $, form a basis of $\fqn$ as a vector space over $\fq$. A basis of this form is called a \textbf{normal basis}.

As a generalization of normal elements, in \cite{huczynska2013existence} \emph{$k$-normal elements} were defined.

\begin{definition} An element $\a \in \fqn$ is called \textbf{$k$-normal} if \[\dim_\fq \left(\mathrm{span}_\fq \left\{\alpha, \alpha^q, \ldots, \alpha^{q^{n-1}} \right\} \right) = n-k.\] 
\end{definition}

Questions on the existence of $k$-normal elements have been investigated in \cite{reis2019existence} and in \cite{tinani_rosenthal_2021}. In this last work, a general lower bound for the number of $k$-normal elements has been also provided.

\section{Cyclic Matroids}\label{sec:cyclic-matroids}
In this section we introduce cyclic matroids and study the structure of their basis sets.

\begin{definition}\label{def:cyclic_mat}
Let $n$ be a positive integer. A matroid $\mM = (E,\mB)$ on the ground set $E$ with $|E|=n$ is called a \textbf{cyclic $k$-matroid} if it has rank $k$ and satisfies one of the following equivalent conditions \begin{enumerate}
\item Its automorphism group $\Aut(\mM)$ contains an isomorphic copy of the cyclic group $\Z_n$.
    \item There exists a cycle $\sigma$ of length $n$ acting on $E$ such that $\sigma(B) \in \mB$ for each $B \in \mB$.
    \item $\mM$ is isomorphic to some matroid $\mM_0 $ with ground set $\{0,1,\ldots, n-1\}$ satisfying \\ $\cycle{0, 1, \ldots, n-1} \in \Aut(\mM_0)$.
\end{enumerate}
When the rank is clear or not necessary, we simply say that $\mM$ is a \textbf{cyclic matroid}.
\end{definition}

For simplicity, we fix the ground set $E= \{0,1,\ldots,n-1\}$, and use $\Z_n$ interchangeably with $E$. Without loss of generality, we can assume that the automorphism group of a cyclic matroid contains the $n$-cycle $\cycle{0,1,\ldots,n-1}$. We define \textit{cyclic shifts} \textbf{on the subsets} of $E$ as follows: let $s \in \Zn$ and $A \subseteq E$, then the shifted subset $s + A$ is defined as $\sigma^s(A)$, where $\sigma$ is the permutation $\cycle{0,1,\ldots,n-1}$. Note that if $A = \{g_0,g_1\ldots,g_{k-1}\}$, then $s+A = \{s + g_0 \bmod n, s+g_1 \bmod n, \ldots, s+g_{k-1} \bmod n\}$.

Notice that  in \cite[p. 330]{welsh2010matroid}, Welsh defines a matroid $\mM$ to be cyclic if $\Aut(\mM) = \Z_n$. We redefine cyclic matroids because, in theory, cyclic objects are defined to be the ones which are closed under cyclic shifts. Definition \ref{def:cyclic_mat} introduces a more general class of matroids which strictly include Welsh's cyclic matroids. For example, the uniform matroid $U_{k,n}$ defined in Example~\ref{ex:uniform_matroid} has automorphism group equal to the symmetric group $\mathcal{S}_n$, hence it is not cyclic according to Welsh's definition. Another example is the well-known Fano matroid, which will be described in  Example~\ref{fano}.

\begin{remark}
For fixed values of $n$ and $k$, cyclic matroids are not uniquely determined. Indeed, for example, in the case of $n=4$ and $k=2$, we have two distinct cyclic $k$-matroids with bases $\mB=\{\{0,1\}, \{1,2\}, \{2,3\}, \{0,3\}\}$ and $\mB^\prime=\{\{0,1\}, \{1,2\}, \{2,3\}, \{3,0\}, \{0,2\}, \{1,3\}\}$. Note that the matroid $(\{0,1,2,3\}, \mB^\prime)$ is the uniform matroid $U_{2,4}$.

\end{remark}

\begin{remark}
It is easy to see that the dual matroid $\mM^\ast$ of a cyclic $k$-matroid with ground set of size $n$, is a cyclic $(n-k)$-matroid. Moreover, for $k \geq 1$, every singleton in a cyclic matroid clearly has rank $1$. So a non-trivial cyclic matroid does not have loops (i.e, elements that do not belong to any basis), and a proper cyclic matroid does not have coloops (i.e., elements that belong to every basis). Another remarkable property of cyclic $k$-matroids is that they are connected (i.e., each pair of elements in $E$ is contained in a minimal dependent set) whenever $k\leq n/2$.
\end{remark}

 The main problem we address in this paper is counting the minimum number of basis elements, i.e. finding the minimum cardinality $\card{\mB}$ in a cyclic $k$-matroid.
Throughout, we let $B_0 := \{0,1,\ldots,k-1\}$. Given a cyclic $k$-matroid $\mM$, we show in Proposition \ref{systematic} that $B_0$ is always a basis of $\mM$. As a result, each of its shifts $B_i = i+ B_0 $ for $1 \leq i \leq n-1$, is also a basis. We will refer to these bases as \textbf{cyclic bases} for the matroid $\mM$. To prove this result, we will associate to each subset of $\Z_n$ a set partition in the following way. 

\begin{definition}\label{cons-block}
Let $A \subseteq \Z_n$ be any set. The \textbf{consecutive block structure} of $A$ is the (ordered) set partition of $A$ given by $\pi(A) = (D_1, D_2, \ldots, D_\ell)$, where each $D_i = \{d_i,d_i+1,\ldots,d_i+|D_i|-1\}$ is a maximal subset of $A$ containing consecutive elements modulo $n$, ordered according to $d_1 < d_2 < \ldots < d_\ell$. \end{definition}

It will be useful also to associate to a $k$-subset of $\Zn$ the following tuple.

\begin{definition}
If $|A| = k$, given the consecutive block structure of $A$, $\pi(A) = (D_1, D_2, \ldots, D_\ell)$, the composition (ordered integer partition) $|D_1| + |D_2| + \cdots + |D_\ell|$ of $k$ is called the \textbf{block composition} of $A$, denoted by $c(A) = (|D_1|,|D_2|,\ldots,|D_\ell|)$.
\end{definition}

\begin{example}
Let $A = \{0,2,3,4,6,7,9\} \subseteq \Z_{10}$, then the consecutive block structure of $A$ is $\pi(A) = (\{2,3,4\}, \{6,7\}, \{9,0\})$, and the block composition of $A$ is $c(A) = (3,2,2)$.
\end{example}

\begin{proposition}\label{systematic}
Let $\mM = (E,\mB)$ be a cyclic $k$-matroid. Then $B_0=\{0, 1, \ldots k-1\}$ is a basis for $\mM$. 
\end{proposition}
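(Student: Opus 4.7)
The plan is to prove that $B_0 := \{0, 1, \ldots, k-1\}$ has rank $k$ in $\mM$; since $\rk(\mM) = k$ and $|B_0| = k$, this immediately makes $B_0$ independent of full rank, hence a basis. The key inputs are the shift-invariance of the rank function (implied by $\Z_n \leq \Aut(\mM)$) together with a minimality argument on the rank sequence of the consecutive prefixes of $E$.

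First, I would note that a cyclic matroid has no loops: if $\{0\}$ were a loop, cyclic invariance would force every singleton to be a loop, making $\rk(E) = 0$ and contradicting $\rk(\mM) = k \geq 1$. Set $r_m := \rk(\{0,1,\ldots,m\})$ for $-1 \leq m \leq n-1$, with $r_{-1} = 0$; by the standard matroid inequalities $r_m - r_{m-1} \in \{0,1\}$, and by the no-loop fact $r_0 = 1$. The goal reduces to showing $r_{k-1} = k$. Suppose, for contradiction, that $r_{k-1} < k$. Then, since a sequence of unit increments starting at $0$ would reach $k$ only at step $k-1$, there must be a smallest index $i^* \in \{1, 2, \ldots, k-1\}$ at which the sequence first plateaus, and by minimality $r_{i^*-1} = r_{i^*} = i^*$.

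By shift-invariance of the rank, $\rk(\{j, j+1, \ldots, j+i^*\}) = \rk(\{j, j+1, \ldots, j+i^*-1\}) = i^*$ for every $j \in \Z_n$, so $j+i^*$ always lies in the matroid closure of the consecutive $i^*$-block preceding it. A straightforward induction on $m$ then yields $r_m = i^*$ for every $m \geq i^*-1$: in the inductive step one observes that $m+1$ belongs to the closure of $\{m+1-i^*, \ldots, m\} \subseteq \{0, \ldots, m\}$, so adjoining $m+1$ does not increase the rank. Taking $m = n-1$ gives $\rk(E) = i^* \leq k-1$, contradicting $\rk(\mM) = k$; hence $r_{k-1} = k$ and $B_0 \in \mB$.

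The main obstacle is the propagation step: one needs cyclic invariance to convert a single local plateau at index $i^*$ into a global cap on the rank of every consecutive prefix, which is exactly what produces the contradiction. A more combinatorial alternative, closer in spirit to Lemma \ref{lem:basis_exch}, would try to iteratively reduce the number of consecutive blocks of a chosen basis by exchanging with one of its cyclic shifts, but because Lemma \ref{lem:basis_exch} only guarantees the \emph{existence} of the exchange set $P$ rather than letting one select it, controlling the resulting block structure is delicate; working with the rank function sidesteps this entirely.
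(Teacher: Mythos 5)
Your proof is correct, and it takes a genuinely different route from the paper. The paper argues directly with the bases axioms: it takes an arbitrary basis $B$, looks at its consecutive block structure, and applies the multiple exchange property (Lemma \ref{lem:basis_exch}) to $B$ and its shift $B+1$ with $Q$ a single element of the last block, showing that iterating this either merges blocks or shrinks the last one, until a basis with a single consecutive block --- hence a shift of $\{0,1,\ldots,k-1\}$ --- is reached. You instead work with the rank function: using that automorphisms preserve rank, you show that a plateau $r_{i^*-1}=r_{i^*}=i^*$ in the prefix-rank sequence would propagate around the whole cycle (every element lies in the closure of the consecutive $i^*$-block preceding it), capping $\rk(E)$ at $i^*\leq k-1$ and contradicting $\rk(\mM)=k$. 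Your approach buys a cleaner argument that sidesteps exactly the bookkeeping your last paragraph worries about --- the paper's proof must track how the unspecified exchange element $p$ affects the block structure, and is somewhat terse on that point --- at the cost of stepping outside the bases-axioms framework into rank/closure language (harmless, since these are standard and shift-invariance of rank is immediate from the fact that independent sets are exactly subsets of bases, a point worth stating explicitly). The paper's approach, in turn, stays within the exchange machinery and the consecutive-block formalism that it reuses heavily in Sections \ref{number_bases} and \ref{gpaxnsection}. Two small touches to add for completeness: dispose of the trivial case $k=0$ (or note $k\geq 1$), and justify the no-loop claim exactly as you sketch it, which is also how the paper asserts it.
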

\begin{proof}
Let $B \in \mB$ be a basis of $\mM$, and let $\pi(B) =(D_1, D_2, \ldots, D_\ell)$ be its consecutive block structure, i.e. $B=D_1 \cup D_2 \cup \ldots \cup D_\ell$.
 If $\ell = 1$, then we are done as one of the shifts of $B$ will be equal to  $B_0$.

Now assume that $\ell > 1$. Using the basis exchange property we will construct a new basis that has $\ell-1$ number of blocks in its consecutive block structure. Note that this is enough to prove the result, as we can apply this argument repetitively until we obtain $\ell=1$. 

Let $D_i = \{d_i,d_i+1,\ldots,d_i + \card{D_i} -1 \}$ for each $i \in \{1,2,\ldots,\ell\}$. We apply the basis exchange property, Lemma \ref{lem:basis_exch}, with respect to bases $B$ and $B+1$, to obtain a new basis element $B^\prime= (B \setminus \{d_\ell\}) \cup \{p\}$, for some $p \in (B+1)\setminus B = \{d_1+\card{D_1},\ldots,d_\ell+\card{D_\ell}\}$. 

Let $B^\prime = D_1^\prime \cup \ldots \cup D^\prime_{\ell^\prime}$ be the consecutive block structure of $B^\prime$. Then it is easy to check that $\ell^\prime = \ell$ or $\ell^\prime = \ell-1$. If $\ell^\prime = \ell-1$, then we are done. So assume that $\ell^\prime = \ell$. This implies that $D^\prime_{\ell} = \{d_\ell + 1,\ldots,d_\ell+\card{D_\ell}-1\}$ or $D^\prime_\ell = \{d_\ell+1,\ldots,d_\ell+\card{D_\ell}\}$. In each case, the smallest element in $D^\prime_\ell$ increases by 1. Hence, by applying this basis exchange process repeatedly the block $D^\prime_\ell$ either vanishes or merges with the next block $D^\prime_1$, and results in a new basis with $\ell-1$ consecutive blocks. 
\end{proof}

\subsection{Basis Exchange Approach for the Number of Bases}\label{number_bases}

 In this subsection, we present some lower bounds on the size of the collection of bases $\mathcal{B}$ of an arbitrary cyclic matroid $\mM=(E,\mB)$. In particular, we use the basis exchange property on the cyclic bases $B_0, B_1, \ldots, B_{n-1}$ to construct other basis elements. 

We first prove some properties of the cyclic bases that we will use to count the number of bases in a cyclic matroid. 
In the following result we compute the size of the difference between the intersection of two cyclic bases and the basis set $B_0$.   

\begin{lemma}\label{intersectioncontained} Let $1 \leq j \leq i \leq n-1$. Then, \[\left| (B_i \cap B_j) \setminus B_0   \right| = \begin{cases}
0 & \mbox{if } i-j\geq k \\
j + k- \max(k,i)& \textnormal{otherwise.}
\end{cases}\]

\end{lemma}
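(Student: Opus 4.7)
The plan is to work directly with the explicit description of the cyclic bases, $B_m = \{m, m+1, \ldots, m+k-1\} \pmod n$, and to read off $B_i \cap B_j$ as an overlap of intervals in $\Z_n$ before removing $B_0 = \{0,1,\ldots,k-1\}$. First, I would dispose of the case $i - j \ge k$: the integer intervals $[j, j+k-1]$ and $[i, i+k-1]$ are disjoint because $j+k-1 < i$, and (under the range of $i$ for which the cyclic interval picture is unambiguous) this disjointness survives reduction modulo $n$, giving $B_i \cap B_j = \emptyset$ and the statement immediately.

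For the case $0 \le i - j < k$, I would identify $B_i \cap B_j$ as the single contiguous block $\{i, i+1, \ldots, j+k-1\}$, which has $j+k-i$ elements. To compute $|(B_i \cap B_j) \setminus B_0|$, I would split on the position of $i$ relative to $k$. When $i \ge k$, every element of $\{i, \ldots, j+k-1\}$ is at least $k$, so none lies in $B_0$, and the size equals $j+k-i = j + k - \max(k, i)$, since $\max(k,i)=i$ in this subcase. When $i < k$, the elements of $B_i \cap B_j$ lying in $B_0$ are precisely $\{i, i+1, \ldots, k-1\}$, contributing $k - i$ removed elements; the remaining size is $(j+k-i) - (k-i) = j$, which agrees with $j + k - \max(k, i) = j + k - k$ in this subcase.

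The main obstacle is the bookkeeping with modular reduction: the clean cyclic-interval picture and the overlap formulas go through cleanly only when the intervals do not wrap in a way that produces spurious intersections with $B_0$ or $B_j$. Once this is checked under the range of $i$ that the lemma intends, the rest is a routine case split on whether $i \ge k$ or $i < k$, and both formula branches follow by direct counting of how many elements of the overlap interval sit above versus below $k$.
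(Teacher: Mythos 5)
Your interval--overlap computation in the non-wrapping regime is essentially the same calculation as the paper's, but the proposal has a genuine gap: the wrap-around cases are exactly where the content of the lemma lies, and you defer them rather than handle them. In particular, your disposal of the case $i-j\ge k$ is wrong as stated: when $i-j>n-k$ the two cyclic intervals do meet even though $i-j\ge k$. For example, with $n=10$, $k=4$, $j=1$, $i=8$ one has $B_1=\{1,2,3,4\}$ and $B_8=\{8,9,0,1\}$, so $B_i\cap B_j=\{1\}\ne\emptyset$. The stated count $0$ is still correct, but only because the wrapped part of the intersection, namely $\{j,j+1,\ldots,i-n+k-1\}$, is always a subset of $B_0$ (its elements lie between $j\ge 0$ and $i-n+k-1\le k-2$), so it disappears once $B_0$ is removed. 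The same phenomenon occurs for $i-j<k$ with $i-j>n-k$ (possible when $k>n/2$), where $B_i\cap B_j$ consists of two blocks, the wrapped one again sitting inside $B_0$. This containment observation is precisely the missing idea; the paper's proof supplies it by writing out all four cases of $B_i\cap B_j$, including the wrapped block, and then noting that only the block $\{i,\ldots,j+k-1\}$ can survive the removal of $B_0$.

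Your hedges --- ``under the range of $i$ for which the cyclic interval picture is unambiguous'' and ``under the range of $i$ that the lemma intends'' --- are not available: the hypothesis is just $1\le j\le i\le n-1$, and the lemma is later invoked (in the proof of Lemma~\ref{lem:intersection_size}) for shifted indices $q_1+i$, $q_1+j$ that can fall in the wrapping regime, so these cases must be argued, not assumed away. Note also that in your subcase $i\ge k$ the assertion that no element of $\{i,\ldots,j+k-1\}$ lies in $B_0$ additionally requires $j+k-1\le n-1$ (i.e.\ $j\le n-k$); if $B_j$ itself wraps, this block spills modulo $n$ back into $B_0$, another piece of bookkeeping that a complete proof has to address explicitly rather than mention as an ``obstacle'' and leave unchecked.
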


\begin{proof}

Write $\ell = i-j$, then it is easy to check that 
   \begin{equation}
      B_i \cap B_j = \begin{cases} \{i, i+1, \ldots, j+k-1\} & \ \text{if} \ \ell<k, \  \ell \leq n-k  \\
     \{j, j+1, \ldots, i-n+k-1\} & \ \text{if} \ \ell \geq k, \ \ell >n-k \\
     \{j, j+1, \ldots, i-n+k-1\}\cup\{i,i+1, \ldots, j+k-1\} & \ \text{if} \ \ell<k, \ \ell > n-k \\
     \emptyset, & \ \text{otherwise}.
     \end{cases}
 \end{equation}

Therefore, if $\ell \geq k$, then $B_i \cap B_j = \emptyset$ or $B_i \cap B_j \subseteq B_0$. Whereas, if $\ell < k$ then
\[(B_i \cap B_j) \setminus B_0 = \begin{cases} \{k, k+1, \ldots, j+k-1\} & \ \text{if}  \ i \leq k  \\
   \{i, i+1, \ldots, j+k-1\} & \ \text{if} \ i > k.
    \end{cases}
    \]
\end{proof}
In order to apply the basis exchange property on $B_0$, we calculate the collection of all cyclic bases that intersect trivially with a subset of $B_0$.

\begin{lemma}
Let $Q \subseteq B_0$. Let $q_1$ and $q_2$ denote, respectively, the smallest and largest elements of $Q$. For $i \in \{ q_2-q_1 +1, \ldots, n-k\}$, the $n-k-q_2+q_1$ bases $B_0+q_1+i$ intersect trivially with $Q$, and any basis satisfying this property must lie among these.
\label{lem:basis_trivial_intersect}
\end{lemma}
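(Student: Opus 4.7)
The plan is to exploit the fact that, by Proposition~\ref{systematic} together with shift-invariance of $\mB$, every shift $B_m := B_0 + m$ is a basis of $\mM$. The task therefore reduces to counting those $m \in \Z_n$ for which $B_m \cap Q = \emptyset$, and checking that the parametrization $m = q_1 + i$ yields exactly the stated range of $i$.

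The main preparatory step is the reduction $B_m \cap Q = \emptyset \iff q_1, q_2 \notin B_m$. The forward direction is immediate since $q_1, q_2 \in Q$. For the converse, I would use the fact that $\Z_n \setminus B_m$ is itself a cyclic interval, of length $n-k$. If it contains both $q_1$ and $q_2$, a length comparison forces it to contain the entire ``short'' arc $\{q_1, q_1+1,\ldots,q_2\}$: the alternative, namely containing the ``long'' arc going the other way around, would require length at least $n - (q_2 - q_1) + 1$, which is impossible in the regime $q_2 - q_1 \leq k-1 \leq n-k-1$ where the claimed count $n-k-q_2+q_1$ is positive. Since $Q \subseteq \{q_1,\ldots,q_2\}$, the containment $Q \subseteq \Z_n \setminus B_m$ follows, and so does $B_m \cap Q = \emptyset$.

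Granted this reduction, the remaining calculation is direct. For a fixed element $q \in B_0$, the condition $q \in B_m$ translates to $m \in \{q-k+1, q-k+2, \ldots, q\} \pmod{n}$, so $q \notin B_m$ is equivalent to $m$ lying in the complementary cyclic interval $\{q+1, q+2, \ldots, q+n-k\} \pmod{n}$ of length $n-k$. Intersecting this condition for $q = q_1$ with the same for $q = q_2$ (two intervals of equal length $n-k$, offset by $q_2 - q_1$, with no wrap since $q_2 - q_1 + n - k \leq n - 1$) yields the interval $\{q_2+1, q_2+2, \ldots, q_1 + n - k\} \pmod{n}$, whose cardinality is $n - k - q_2 + q_1$. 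Reparametrizing $m = q_1 + i$ returns precisely $i \in \{q_2 - q_1 + 1, \ldots, n-k\}$, matching the statement.

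The only place where care is required is the first reduction, where one must justify why avoidance of the two endpoints $q_1, q_2$ already forces avoidance of all of $Q$; this hinges on the short-arc argument above together with the inequality $q_2 - q_1 + 1 \leq n - k$. Outside that regime, the claimed range for $i$ is empty and the lemma is vacuously true, so no separate argument is needed in the boundary case. Everything else is routine bookkeeping with cyclic intervals.
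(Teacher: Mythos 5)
Your argument is correct and follows essentially the same route as the paper: both identify the admissible shifts by requiring the window to miss $q_2$ (forcing $i > q_2-q_1$) and to miss $q_1$ (forcing $i \le n-k$), and then observe that such a window automatically misses all of $Q$ --- the paper checks each $q \in Q$ directly by a two-case inequality, while you package this as the endpoint reduction using the fact that $\Z_n \setminus B_m$ is a cyclic interval of length $n-k$. One small remark: the long-arc alternative is already ruled out by $q_2 - q_1 \le k-1$ alone (which gives $n-(q_2-q_1)+1 \ge n-k+2 > n-k$ unconditionally), so the extra condition $k-1 \le n-k-1$ in your justification is superfluous and no appeal to vacuity outside that regime is needed; the only genuinely needed hypothesis is $q_2-q_1 < n-k$, exactly as in the statement.
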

\begin{proof} We may rule out cyclic bases of the form $B_0 + j$ with $j<q_2$, since these would always contain $q_2$. So, we are looking for cyclic bases of the form $B_0+q_1+i$ with $i>q_2-q_1$. For $q_1$ to lie outside these bases, we would additionally need $k-1 + q_1+i < n+q_1$, so $i\leq n-k$. Now let $q \in Q$, and suppose that $q \in B_0 + q_1+i$ for some $q_2-q_1<i\leq n-k$. Thus, $q = b_0 + q_1 + i$ or $q+n = b_0 +q_1 + i$ for some $q_2-q_1<i\leq n-k$ and $b_0 \in B_0$. Since $q < q_2$, the first case is impossible. Similarly, $b_0 +q_1+i < k-1+q_1+i <n+q_1 <n+q$, so the second case is also impossible. Thus, $Q \subseteq B_0\setminus B_{q_1+i} $ for all $q_2-q_1<i\leq n-k$. The number of such bases is clearly given by $n-k-q_2+q_1$, which is the number of valid indices $i$.
\end{proof}

\begin{lemma} Let $Q \subseteq B_0$ be fixed with $\left|Q \right| = r$. Let $q_1$ and $q_2$ be the smallest and largest elements of $Q$, respectively, and assume that $q_2-q_1<n-k$.  For any $q \in B_0$ and $q_2-q_1+1 \leq j < i \leq n-k$, we have that $\card{(B_{q+i} \setminus B_0 )\cap (B_{q+j} \setminus B_0 )} <r $ if and only if $ \ i-j \geq k-r+1$ .  \label{lem:intersection_size}
\end{lemma}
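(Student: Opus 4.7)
The plan is to compute $(B_{q+i} \setminus B_0) \cap (B_{q+j} \setminus B_0) = (B_{q+i} \cap B_{q+j}) \setminus B_0$ by first describing the intersection $B_{q+i} \cap B_{q+j}$ explicitly and then counting how many of its elements lie in $B_0 = \{0, 1, \ldots, k-1\}$. Writing $\ell = i - j$, the hypotheses $1 \leq j < i \leq n - k$ force $\ell \leq n - k - 1 < n - k$, so the wrap-around cases of Lemma \ref{intersectioncontained} are ruled out for the pair $(B_i, B_j)$: either $\ell \geq k$, in which case $B_i \cap B_j = \emptyset$, or $\ell < k$, in which case $B_i \cap B_j = \{i, i+1, \ldots, j + k - 1\}$ is a block of $k - \ell$ consecutive integers. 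Since the shift by $q$ is a bijection on $\Z_n$, the intersection $B_{q+i} \cap B_{q+j}$ is either empty or equals the $(k-\ell)$-element consecutive segment starting at $q + i$, taken modulo $n$.

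The ``if'' direction is then immediate: if $\ell \geq k - r + 1$ then $|B_{q+i} \cap B_{q+j}| \leq k - \ell \leq r - 1$, so certainly $|(B_{q+i} \cap B_{q+j}) \setminus B_0| < r$. For the ``only if'' direction I would assume $\ell \leq k - r$ and aim to show $|(B_{q+i} \cap B_{q+j}) \setminus B_0| \geq r$. A preliminary observation used throughout is that the $r$ distinct elements of $Q$ all lie in the integer interval $[q_1, q_2]$, forcing $q_2 - q_1 + 1 \geq r$ and hence, by the hypothesis on $j$, the crucial bound $j \geq r$.

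The heart of the argument is then a case analysis of the segment $S$ of $k - \ell$ consecutive residues starting at $q + i$, split according to whether $S$ wraps around $n$ (that is, whether $q + j + k - 1 \geq n$) and whether its starting residue $q + i$ falls inside $B_0$. In the no-wrap sub-case, the elements of $S$ in $B_0$ form the initial portion of $S$ up to $k - 1$ (or nothing, if $q + i \geq k$), contributing $\max(0, k - q - i)$ to the overlap; subtracting from $k - \ell$ gives either $|S \setminus B_0| = k - \ell \geq r$ or $|S \setminus B_0| = q + j \geq r$, both yielding the desired bound. The main obstacle will be the wrap sub-case, where $S$ can overlap $B_0$ both at its head and through its wrapped tail; here the hypothesis $q_2 - q_1 < n - k$ must be leveraged, together with the bounds on $q$, $i$, and $j$, to show that enough elements of $S$ survive outside $B_0$ to reach the count of $r$, thereby completing the proof.
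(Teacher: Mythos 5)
Your reduction to the segment description, the ``if'' direction, the observation $j\geq r$, and the no-wrap sub-case all check out, and your case split is in fact more careful than the paper's own proof, which simply invokes the closed formula of Lemma \ref{intersectioncontained} (a formula that silently ignores the possibility that the segment $\{q+i,\ldots,q+j+k-1\}$ wraps past $n$ and loses elements to $B_0$ at its tail). But the place where you stop --- the wrap sub-case, which you yourself flag as ``the main obstacle'' --- is a genuine gap, and it cannot be closed as you plan, because for arbitrary $q\in B_0$ the statement is simply false there. Concretely, take $n=12$, $k=6$, $Q=\{0,1\}$ (so $r=2$, $q_1=0$, $q_2=1$, and $q_2-q_1=1<n-k=6$), and choose $q=5$, $j=5$, $i=6$, which satisfy $q_2-q_1+1\leq j<i\leq n-k$. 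Then $B_{q+j}=B_{10}=\{10,11,0,1,2,3\}$ and $B_{q+i}=B_{11}=\{11,0,1,2,3,4\}$, so $(B_{11}\setminus B_0)\cap(B_{10}\setminus B_0)=\{11\}$ has size $1<r$, even though $i-j=1<k-r+1=5$. So no amount of leveraging $q_2-q_1<n-k$ will make the tail-wrap case go through for all $q\in B_0$; the quantity that actually controls it is $q$ itself.

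The repair, which is also all that Proposition \ref{prop:basis_exch_bound_simple} uses, is to prove the lemma for $q=q_1$. Since $Q$ consists of $r$ distinct integers in $\{q_1,\ldots,q_2\}\subseteq\{0,\ldots,k-1\}$, one has $q_1\leq k-r$ (and $r\leq q_2-q_1+1\leq n-k$ by hypothesis), and these are exactly the inequalities your wrap sub-case needs: if only the tail of $S$ meets $B_0$, then $\card{S\setminus B_0}=n-q_1-i\geq n-(k-r)-(n-k)=r$; if both head and tail meet $B_0$, then $\card{S\setminus B_0}=n-k\geq r$; the head-only and no-overlap counts $q_1+j$ and $k-(i-j)$ are at least $r$ as you observed. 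With the statement restricted to $q=q_1$ (or, say, to $q\leq k-r$), your outline completes into a correct proof --- and one that is more honest than the paper's, since the paper's argument inherits the unhandled wrap case from Lemma \ref{intersectioncontained}, whose displayed formula $j+k-\max(k,i)$ overcounts precisely when $j>n-k$.
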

\begin{proof} Let $q_2-q_1+1 \leq j < i \leq n-k$ and $i-j \geq k-r+1$. Note first that $(B_{q+i} \setminus B_0 )\cap (B_{q+j} \setminus B_0 ) = (B_{q+i}\cap B_{q+j}) \setminus B_0 $. Thus, by Lemma \ref{intersectioncontained}, we have
\[\card{(B_{q_1+i} \setminus B_0) \cap (B_{q_1+j} \setminus B_0) } = \begin{cases} 
0 & \mbox{if } i-j \geq k \\
q_1+j + k- \max(k,q_1+i)& \mbox{otherwise}.
\end{cases}\]
Now, since $r \leq q_2-q_1+1 \leq j <i$, the smallest value of $j$ and $i$, respectively, in the above expressions are $q_2-q_1+1$ and $q_2-q_1+k-r+2$, respectively. Thus, $q_1+i\geq (q_2-r)+k+2\geq q_1+1 + k >k$, and so for any $q\in B_0$ we have
 $\max\{k,q+i\}= q_1+i$ for the relevant values of $i$. Now, if $i-j \geq k$, we are done. If $i-j < k$, then, $\card{(B_{q+i} \setminus B_0) \cap (B_{q+j} \setminus B_0) } < r,$
 which can happen if and only if
 $q+j+k-q-i <r$ and  equivalently when $i-j > k-r $.  This completes the proof.
\end{proof}

For an arbitrary cyclic $k$-matroid $\mM=(\Z_n,\mB)$, we know from Proposition~\ref{systematic} that all the $n$ shifts of $B_0$ are bases of $\mM$. In the rest of the section, we use Lemma \ref{lem:basis_exch} to show the existence of more bases in~$\mB$.  

\begin{proposition}
Let $\mM=(\Z_n,\mB$) be a cyclic $k$-matroid and let $Q \subseteq B_0 = \{0,1,\ldots,k-1\}$ with $\left|Q \right| = r$. Let $q_1$ and $q_2$ denote, respectively, the smallest and largest elements of $Q$ and assume that $q_2-q_1<n-k$. Define $m=  \left\lfloor \frac{n-k-q_2+q_1-1}{k-r+1} \right\rfloor+1$. Then, there exist $m$ distinct bases in $\mB$ of the form $(B_0 \setminus Q) \cup P_{i}$, where $P_{i} \subseteq B_{q_1+i}\setminus B_0$ and $q_2-q_1 < i \leq n-k$. \label{prop:basis_exch_bound_simple}
\end{proposition}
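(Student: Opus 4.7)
The plan is to combine the three previous lemmas in a direct way. First, I would apply Lemma \ref{lem:basis_trivial_intersect}, which tells us exactly which cyclic bases intersect $Q$ trivially: these are precisely the bases $B_{q_1+i}$ for $i \in \{q_2-q_1+1,\ldots,n-k\}$. For each such index $i$, since $Q\subseteq B_0\setminus B_{q_1+i}$, the Multiple Exchange Property (Lemma \ref{lem:basis_exch}) applied to the bases $B_0$ and $B_{q_1+i}$ yields a subset $P_i\subseteq B_{q_1+i}\setminus B_0$ such that $(B_0\setminus Q)\cup P_i\in\mathcal{B}$. Counting cardinalities forces $|P_i|=|Q|=r$.

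Next, I would argue distinctness. Two candidate bases $(B_0\setminus Q)\cup P_i$ and $(B_0\setminus Q)\cup P_j$ are equal if and only if $P_i = P_j$, and since $|P_i|=|P_j|=r$ with $P_i\subseteq B_{q_1+i}\setminus B_0$ and $P_j\subseteq B_{q_1+j}\setminus B_0$, equality would require $|(B_{q_1+i}\setminus B_0)\cap(B_{q_1+j}\setminus B_0)|\geq r$. By Lemma \ref{lem:intersection_size}, this intersection has size strictly less than $r$ whenever $i-j\geq k-r+1$, which guarantees $P_i\neq P_j$ in that range. So to extract distinct bases, it suffices to pick indices from $\{q_2-q_1+1,\ldots,n-k\}$ that are spaced at least $k-r+1$ apart.

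The final step is a counting argument: the greedy choice $i_t=(q_2-q_1+1)+(t-1)(k-r+1)$ is admissible as long as $i_t\leq n-k$, equivalently
\[
(t-1)(k-r+1)\leq n-k-q_2+q_1-1.
\]
The largest such $t$ is $m=\bigl\lfloor\tfrac{n-k-q_2+q_1-1}{k-r+1}\bigr\rfloor+1$, which is exactly the claimed number. Applying the exchange procedure at each of these $m$ indices then produces $m$ distinct bases of the required form.

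The routine parts are the application of the two lemmas; the only point that requires a little care is translating the intersection-size bound of Lemma \ref{lem:intersection_size} into the spacing condition on the indices, and then checking that the greedy spacing really does saturate the floor formula. The hypothesis $q_2-q_1<n-k$ is what guarantees that the index range is nonempty so that at least the base case $m\geq 1$ makes sense.
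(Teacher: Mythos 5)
Your proof is correct and follows essentially the same route as the paper: invoke Lemma \ref{lem:basis_trivial_intersect} to isolate the cyclic bases $B_{q_1+i}$ disjoint from $Q$, apply the Multiple Exchange Property to each, and use Lemma \ref{lem:intersection_size} to force distinctness for indices spaced $k-r+1$ apart, with the same greedy count giving $m$. Your explicit remark that $|P_i|=r$ and the resulting distinctness argument is a slightly more detailed version of the step the paper states briefly, but there is no substantive difference.
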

\begin{proof} 
By Lemma \ref{lem:basis_trivial_intersect}, we have $Q \cap B_{q_1+i} = \emptyset$ precisely for the $n-k-q_2+q_1$ values of $i \in I:=\{q_2-q_1+1, \ldots, n-k\}$. Applying Lemma \ref{lem:basis_exch} to $B_0$ and $B_{q_1+i}$ for these values of $i$, we get subsets $P_i \subseteq B_{q_1+i}\setminus B_0$ such that $(B_0\setminus Q)\cup P_i$ is a basis.

Using the result of Lemma \ref{lem:intersection_size} for $q=q_1$, we have $\card{(B_{q_1+i} \setminus B_0 )\cap (B_{q_1+j} \setminus B_0 )} <r $ for each pair $i,j \in I$ that satisfies $i-j \geq k-r+1$. Thus, for each 
$$ i \in \{{q_2-q_1+1}, {q_2-q_1+1+(k-r+1)}, \ldots, {q_2-q_1+1+ \tilde{m}  (k-r+1)} \},$$
where $\tilde{m} = \left\lfloor\frac{n-k -q_2+q_1-1}{k-r+1}\right\rfloor$ is the largest integer such that $q_2-q_1+1 + \Tilde{m} (k-r+1) \leq n-k$, we get distinct bases $(B_0\setminus Q)\cup P_i$. Thus, there are $m = \tilde{m}+1$ bases in  $\mB$ of this form. 
\end{proof}

We now apply Proposition \ref{prop:basis_exch_bound_simple} on each subset $Q$ of $B_0$ to obtain a lower bound on the size of $\mB$. For the next two theorems, we use the following convention for binomial coefficients: \begin{equation}\label{binom_convention}
    \binom{a}{-1} := \begin{cases} 0 & \mbox{if}\ a \geq 0 \\ 1 & \mbox{if}\ a = -1. \end{cases}
\end{equation}

\begin{theorem}\label{thm:m1bound} Let $\mM = (\Z_n,\mB)$ be a cyclic $k$-matroid. Then, there are 
at least $m_1(n,k)$ distinct bases in $\mathcal{B}$ of the form $(B_0\setminus Q) \cup P$, where $Q\subseteq B_0$, $P \subseteq B_i$ for some $1 \leq i \leq n-1$, and
\begin{align*}    m_1(n,k)= 1+ \sum\limits_{\Delta=0}^{\min(k-1,n-k-1)} \sum\limits_{r=1}^{\Delta+1}  (k-\Delta) \binom{\Delta-1}{r-2} \left( \left \lfloor \frac{n-k-\Delta-1}{k-r+1} \right \rfloor +1\right).  
\end{align*} \label{bound:using_differences} 
\end{theorem}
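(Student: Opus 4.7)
The plan is to enumerate lower-bound contributions coming from Proposition~\ref{prop:basis_exch_bound_simple} applied to every admissible subset $Q\subseteq B_0$, with the leading $1$ in $m_1(n,k)$ accounting for $B_0$ itself, which is a basis by Proposition~\ref{systematic}.

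First I would fix notation: for a nonempty $Q\subseteq B_0$ with $|Q|=r$, denote by $q_1$ and $q_2$ the smallest and largest elements of $Q$ and set $\Delta:=q_2-q_1$. Whenever $\Delta<n-k$, Proposition~\ref{prop:basis_exch_bound_simple} produces
\[
    m(Q)=\left\lfloor\frac{n-k-\Delta-1}{k-r+1}\right\rfloor+1
\]
pairwise distinct bases of the form $(B_0\setminus Q)\cup P_i$ with $P_i\subseteq B_{q_1+i}\setminus B_0$. The crucial observation for avoiding double counting across different choices of $Q$ is that each such basis satisfies $\bigl((B_0\setminus Q)\cup P_i\bigr)\cap B_0=B_0\setminus Q$, since $P_i$ is disjoint from $B_0$ by construction. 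Consequently, bases arising from distinct $Q,Q'\subseteq B_0$ cannot coincide, and together with $B_0$ itself they yield at least $1+\sum_{Q}m(Q)$ distinct bases, where the sum ranges over nonempty $Q\subseteq B_0$ with $\Delta<n-k$.

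Next I would reorganize this sum by the parameters $(\Delta,r)$. The range $\Delta\in\{0,\ldots,\min(k-1,n-k-1)\}$ reflects both the constraint $Q\subseteq B_0=\{0,\ldots,k-1\}$ and the hypothesis $\Delta<n-k$ of the proposition; for each such $\Delta$, the size $r$ ranges over $\{1,\ldots,\Delta+1\}$, since $Q$ is nonempty and must fit inside the block $\{q_1,q_1+1,\ldots,q_2\}$ of $\Delta+1$ consecutive integers. A short combinatorial count shows that the number of admissible $Q$ with prescribed $(\Delta,r)$ equals $(k-\Delta)\binom{\Delta-1}{r-2}$: the factor $k-\Delta$ records the possible positions of the pair $(q_1,q_2)$ inside $B_0$, and $\binom{\Delta-1}{r-2}$ counts the ways to choose the remaining $r-2$ elements of $Q$ from the $\Delta-1$ interior integers strictly between $q_1$ and $q_2$. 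The singleton case $r=1$ forces $\Delta=0$ and is covered by the convention $\binom{-1}{-1}=1$ from~\eqref{binom_convention}. Substituting these multiplicities together with $m(Q)$ into $1+\sum_{Q}m(Q)$ yields precisely the stated formula for $m_1(n,k)$.

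The main obstacle will be the distinctness bookkeeping: one must verify that bases generated for different $Q$'s do not accidentally coincide and that the binomial convention correctly accommodates the singleton edge case. The former is handled by the intersection-with-$B_0$ argument above, and the latter by the explicit convention already built into~\eqref{binom_convention}; beyond these two points, the rest of the argument reduces to direct substitution and rearrangement of the resulting double sum.
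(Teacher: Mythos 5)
Your proposal is correct and follows essentially the same route as the paper's proof: apply Proposition~\ref{prop:basis_exch_bound_simple} to each admissible $Q\subseteq B_0$, note that distinct $Q$'s give distinct bases because intersecting with $B_0$ recovers $B_0\setminus Q$ (the $P_i$ being disjoint from $B_0$), count the subsets with prescribed $(\Delta,r)$ as $(k-\Delta)\binom{\Delta-1}{r-2}$, and add $1$ for $Q=\emptyset$. Your explicit intersection-with-$B_0$ justification and the handling of the $r=1$, $\Delta=0$ edge case via the convention~\eqref{binom_convention} match the paper's argument, stated slightly more carefully.
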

\begin{proof} 
First note that for distinct subsets $Q,{Q^\prime}$ of $B_0$, bases of the form $(B_0\setminus Q) \cup P$ and $(B_0 \setminus Q^\prime)\cup P^\prime$ are distinct regardless of $P$ and $P^\prime$. So, we may simply add up the number of bases resulting from the individual subsets $Q$.

Now for a subset $Q$ of $B_0$ with smallest and largest terms $q_1$ and $q_2$, respectively, and size $r\geq 1$, write $\Delta=q_2-q_1$ as the value corresponding to $Q$. 

In the case of $\Delta = 0$, we get $r = 1$ and the number of bases of the form $(B_0\setminus \{q\})\cup \{p\}$ is given by $k \left( \left\lfloor \frac{n-k-1}{k} \right\rfloor + 1 \right)$. This directly follows from Proposition \ref{prop:basis_exch_bound_simple} by taking $q_1=q_2=q$ and $r=1$. 

 For a fixed value of $\Delta \in \{1, 2, \ldots, \min\{k-1, n-k-1\}\}$, we may calculate the number of subsets $Q$ corresponding to $\Delta$ and with a fixed size $r$ as follows. There are $(k-\Delta)$ subsets of $B_0$ of the form $\{q_1, q_1+1, \ldots, q_1+\Delta\}$, each with size $\Delta+1$. Any $Q$ must contain $q_1$ and $q_1+\Delta$, and may then contain any $(r-2)$-subset of the remaining $\Delta+1-2 =\Delta-1$ elements of this set. Thus, this gives us a total of $(k-\Delta) \binom{\Delta-1}{r-2}$ options for $Q$ corresponding to $r$.

We may further sum over the relevant values of $r$ for a given value of $\Delta$, i.e. from 2 to $\Delta+1$. For each of these subsets $Q$, there are at least $\left\lfloor \frac{n-k-\Delta-1}{k-r+1} \right\rfloor +1 $ distinct bases in $\mB$, by Proposition \ref{prop:basis_exch_bound_simple}. Finally, we add $1$ to include the case $Q=\emptyset$. This completes the proof. 
\end{proof}

The next result provides a different bound on the number of bases of a cyclic matroid.

\begin{theorem}\label{thm:m2bound} Let $\mM = (\Z_n,\mB)$ be a cyclic $k$-matroid. Then, there are 
at least $m_2(n,k)$ distinct bases in $\mathcal{B}$ of the form $(B_0\setminus Q) \cup P_{i_1 k}\cup P_{i_2 k} \cup \ldots \cup P_{i_w k} \cup P_{(\ell+1)k}$, where $P_{i_tk} \subseteq B_{i_tk} $ for $1\leq i_t \leq \ell$, $P_{(\ell+1)k} \subseteq B_{(\ell+1)k}$,  $0 \leq w \leq \ell$, $\ell = \left\lfloor\frac{n}{k} \right\rfloor-1 $, and
\begin{align*}    m_2(n,k)= \sum\limits_{\card{Q}=0}^{k}  \sum\limits_{j=0}^{\min(n-(\ell+1)k, \card{Q})}  \binom{(\ell+2)k-n}{j}\binom{k-j}{\card{Q}-j}\sum\limits_{w=0}^{\ell} \binom{\ell}{w}\binom{\card{Q}-j-1}{w-1}.
\end{align*}

\end{theorem}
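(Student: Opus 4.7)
The plan is to generalise the approach of Theorem~\ref{thm:m1bound}, now exploiting the full spaced family of cyclic bases $B_0, B_k, B_{2k}, \ldots, B_{\ell k}, B_{(\ell+1)k}$ simultaneously. The key structural observation is that since $(\ell+1)k \le n$, the bases $B_0, B_k, \ldots, B_{\ell k}$ are pairwise disjoint, and together with the ``tail'' $B_{(\ell+1)k} \setminus B_0 = \{(\ell+1)k, \ldots, n-1\}$ of size $n - (\ell+1)k$ they form a partition of $\Z_n$; the wrap-around portion of $B_{(\ell+1)k}$ sits inside $B_0$, yielding the overlap $B_0 \cap B_{(\ell+1)k}$ of size $(\ell+2)k - n$.

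For each admissible configuration---specified by a subset $Q \subseteq B_0$, a parameter $j$ with $0 \le j \le \min(n-(\ell+1)k, |Q|)$, a choice of $w$ middle indices $i_1 < \cdots < i_w$ in $\{1,\ldots,\ell\}$, and a composition $(a_1, \ldots, a_w)$ of $|Q| - j$ into $w$ positive parts---I would construct the claimed basis by iterated application of Lemma~\ref{lem:basis_exch}. First I would exchange $B_0$ against $B_{(\ell+1)k}$ on a carefully chosen size-$j$ subset of $B_0 \setminus B_{(\ell+1)k}$, producing a replacement piece $P_{(\ell+1)k} \subseteq B_{(\ell+1)k} \setminus B_0$ of size $j$. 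Next, for $t = 1, \ldots, w$, I would sequentially exchange the current basis against $B_{i_t k}$ on a piece of size $a_t$ drawn from the residual part of $Q$; each of these steps is a clean basis exchange because $B_{i_t k}$ is disjoint from $B_0$ and from every previously inserted replacement piece.

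Distinctness across configurations then follows from the partition of $\Z_n$ above: from the final basis $B$ one recovers $Q = B_0 \setminus (B \cap B_0)$, each $P_{i_t k} = B \cap B_{i_t k}$, and $P_{(\ell+1)k} = B \cap (B_{(\ell+1)k} \setminus B_0)$, so the entire tuple is uniquely determined by $B$, and distinct configurations yield distinct bases. The count in the statement then arises by summing over all admissible configurations: for fixed $|Q|$ and $j$, the factor $\binom{(\ell+2)k-n}{j} \binom{k-j}{|Q|-j}$ enumerates pairs $(S, Q)$ where $S \subseteq Q \cap (B_0 \cap B_{(\ell+1)k})$ has size $j$---equivalently, choose $S$ inside the overlap and then extend to $Q$ by picking the remaining $|Q|-j$ elements from the other $k-j$ positions of $B_0$---while $\sum_{w=0}^{\ell} \binom{\ell}{w} \binom{|Q|-j-1}{w-1}$ counts selections of $w$ middle bases together with compositions of $|Q|-j$ into $w$ positive parts.

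The main obstacle I anticipate is reconciling the natural exchange data with the auxiliary parameter $S$: in particular, verifying that for each admissible configuration the sequential exchange procedure can indeed be realised to match the prescribed combinatorial data, and that the interaction between $Q$ and the overlap $B_0 \cap B_{(\ell+1)k}$ neither obstructs the construction nor causes two distinct configurations to collapse onto the same basis.
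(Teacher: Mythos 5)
Your construction follows the same route as the paper's proof: perform the exchange with $B_{(\ell+1)k}$ first, so that the inserted piece lies in the tail $T = B_{(\ell+1)k}\setminus B_0$, then exchange sequentially against the pairwise disjoint bases $B_{i_1k},\ldots,B_{i_wk}$, and use the partition $\Z_n = B_0\sqcup B_k\sqcup\cdots\sqcup B_{\ell k}\sqcup T$ to read the data off the final basis. The gap is exactly the step you flag at the end, and it is not a technicality. From a basis $B$ of the stated form one recovers only $Q = B_0\setminus B$, the indices $i_t$ with $B\cap B_{i_tk}\neq\emptyset$, the sizes $a_t = \card{B\cap B_{i_tk}}$, and $j=\card{B\cap T}$; your auxiliary set $S\subseteq Q\cap B_0\cap B_{(\ell+1)k}$ is \emph{not} recoverable. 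Hence your distinctness argument guarantees at most one basis per recoverable signature $(Q,j,\{i_t\},(a_t))$, whereas the quantity you sum, $\binom{(\ell+2)k-n}{j}\binom{k-j}{\card{Q}-j}$, enumerates pairs $(S,Q)$: distinct pairs with the same union $Q$ and the same $j$ are counted separately but are never shown (and cannot be shown by this argument) to produce distinct bases.

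Realizability also fails as stated. Lemma \ref{lem:basis_exch} forces the elements removed in the exchange with $B_{(\ell+1)k}$ to lie in $B_0\setminus B_{(\ell+1)k}$ (which has size $n-(\ell+1)k$, while the overlap $B_0\cap B_{(\ell+1)k}$ has size $(\ell+2)k-n$), so a prescribed $j\geq 1$ is achievable for a given $Q$ only if $\card{Q\setminus B_{(\ell+1)k}}\geq j$. Your enumeration, which places $S$ inside the overlap, does not enforce this: for $n=7$, $k=3$ the overlap is $\{0,1\}$ and $B_0\setminus B_6=\{2\}$, and the configuration $Q=\{0,1\}$, $j=1$ is counted although no element of $Q$ can be routed through $B_{(\ell+1)k}$, so the iterated-exchange construction cannot produce it. Thus, as written, neither ``every counted configuration yields a basis'' nor ``distinct counted configurations yield distinct bases'' is established; repairing this would require re-parametrizing the count by the recoverable data subject to $j\leq\card{Q\setminus B_{(\ell+1)k}}$, which leads to a different formula. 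To be fair, this enumeration is also the tersest point of the paper's own proof, which asserts the same binomial product without addressing the recoverability question; but in your proposal the step is explicitly left open, so the proof is incomplete.
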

\begin{proof}
Note that $\ell = \left\lfloor\frac{n}{k}\right\rfloor-1 $ gives the number of cyclic bases of the form $B_{ik}$, $1 \leq i \leq \ell$, which are disjoint pairwise  as well from $B_0$, and for which basis exchange is possible for any subset $Q \subseteq B_0$. If $k \nmid n$ then we have an additional basis $B_{(\ell+1)k}$ with $\card{B_{(\ell+1)k}\setminus B_0}=n-(\ell+1)k$. Clearly, on performing multiple basis exchanges, for every $Q \subseteq B_0$ we obtain that there must be at least one basis element of the form \begin{equation}\label{exchanged_bases}
    (B_0\setminus Q) \cup P_{i_1 k}\cup P_{i_2 k} \cup \ldots \cup P_{i_w k} \cup P_{(\ell+1)k},
\end{equation} where $P_{i_tk} \subseteq B_{i_tk} $ for $1\leq i_t \leq \ell$, $P_{(\ell+1)k} \subseteq B_{(\ell+1)k}$,  $0 \leq w \leq \ell$.
Note that any basis element can be uniquely written as in \eqref{exchanged_bases}, hence it is counted at most once.

 We count all sets of this form as follows. For a given subset $Q$, let $j$ denote the number of elements $P_{(\ell+1)k}$ picked from the last basis element $B_{(\ell+1)k}$. This leaves $\card{Q} - j$ elements to be picked from the remaining bases, which is possible in any way since they are all disjoint from $B_0$. 
 
 Let $w$ be the number of bases $B_{i_tk}$, $1 \leq t \leq w$, $1 \leq i_t \leq \ell$ chosen for basis exchange with $B_0$ after $B_{(\ell+1)k}$. For $w>0$, the number of possibilities equals to the number of compositions of $\card{Q}-j$ of length $w$, i.e. to $\binom{\card{Q}-j-1}{w-1}$. The option $w=0$ may be included in this expression as well, using our convention \eqref{binom_convention}, since it must be counted only if $\card{Q} =j$. Now since $\binom{\ell}{w}$ gives the number of choices of the bases $B_{i_jk}$ used for exchange, the total number of bases obtained by this process for a fixed subset $Q$ and fixed $j>0$ is $\sum\limits_{w=0}^{\ell} \binom{\ell}{w}\binom{\card{Q}-j-1}{w-1}$. 

Now, for fixed values of $\card{Q}$ and $j$, $Q$ can be chosen in the following way: first pick $j$ elements from $B_0 \setminus B_{(\ell+1)k} $ and then  $\card{Q}-j$ elements from the remaining $k-j$ elements in $B_0$. Thus, the total number of bases obtained by this process for a fixed cardinality $\card{Q}$ and fixed $j>0$ is $ \binom{(\ell+2)k-n}{j}\binom{k-j}{\card{Q}-j}\sum\limits_{w=0}^{\ell} \binom{\ell}{w}\binom{\card{Q}-j-1}{w-1}$. 

Finally, note that $\card{B_{(\ell+1)k}\setminus B_0} = n-(\ell+1)k $ and $\card{B_0 \setminus B_{(\ell+1)k}} = (\ell+2)k-n$. So, we must have $j \leq \min(\card{Q},  n-(\ell+1)k,  (\ell+2)k-n)$. Due to the presence of the term $\binom{(\ell+2)k-n}{j}\binom{k-j}{\card{Q}-j}$, we may do away with the last term in the minimum. $Q $ is allowed to vary across all subsets of $B_0$, so we take a sum over $0 \leq \card{Q} \leq k$. This completes the proof.
\end{proof}

\subsection{Group Action Approach for the Number of Bases}\label{gpaxnsection}

In order to further investigate cyclic matroids, we define the following group action of $\Z_n$ on $2^\Z_n$. 
\begin{align}\label{gpaxnall}
\varphi : \Z_n \times 2^{\Z_n} & \rightarrow 2^{\Z_n} \\
    (s, A) & \mapsto A+s \end{align}
 It follows from the definition that the basis set $\mB$ of a cyclic matroid is closed under the action $\varphi$. In other words, $\mB$ is a union of orbits of $2^{\Z_n}$ under $\varphi$. Therefore, in order to study some properties of a cyclic matroid, we analyse here the orbits and stabilizers of $\varphi$.

For any $A \subseteq \Z_n$, the orbit of $A$ is denoted by $\Orb(A) = \{ A + s  : s \in \Z_n \}$ and the stabilizer of $A$ is denoted by $\Stab(A) = \{s \in \Z_n : A+s = A\}$.

\begin{remark}
Let $A \subseteq \Z_n$. Then $\Stab(A) = \Stab(A^C)$, and hence $|\Orb(A)| = |\Orb(A^C)|$, where $A^C$ denotes the complement of $A$ in $\Z_n$. 
\end{remark}

\subsubsection{Size of a Stabilizer}

We know that for any $A \subseteq \Z_n$, $\Stab(A)$ is a subgroup of $\Z_n$, and so $|\Stab(A)|$ divides $n$. Moreover, $\Stab(A) = \langle s_0 \rangle$ for some $s_0$ that divides $n$.

\begin{proposition}
Let $A \subseteq \Z_n$ and $s \in \{1, \ldots, n-1\}$. Then $s \in \Stab(A)$ if and only if $A$ is a union of arithmetic progressions with common difference $s$, each with length $\frac{n}{\gcd(n,s)}$. \label{prop:AP}
\end{proposition}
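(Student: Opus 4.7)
The plan is to translate the condition $s \in \Stab(A)$ into a statement about cosets of the cyclic subgroup $\langle s \rangle \leq \Z_n$, and then observe that these cosets are precisely arithmetic progressions of common difference $s$ with the claimed length.

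First I would handle the forward direction. Assume $s \in \Stab(A)$, so $A + s = A$. By iterating, $A + ts = A$ for every integer $t$, so for each $a \in A$ the whole coset $a + \langle s \rangle \subseteq A$. Hence $A$ decomposes as a disjoint union of cosets of $\langle s \rangle$. The subgroup $\langle s \rangle \leq \Z_n$ has order $n/\gcd(n,s)$ (a standard fact about cyclic groups), and each coset $a + \langle s \rangle = \{a, a+s, a+2s, \ldots, a+(n/\gcd(n,s)-1)s\}$ is exactly an arithmetic progression modulo $n$ with common difference $s$ and length $n/\gcd(n,s)$.

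For the converse, suppose $A = \bigcup_{i} L_i$ where each $L_i$ is an arithmetic progression of common difference $s$ and length $n/\gcd(n,s)$. Any such $L_i$ can be written as $a_i + \{0, s, 2s, \ldots, (n/\gcd(n,s)-1)s\} = a_i + \langle s \rangle$ in $\Z_n$. Since $\langle s \rangle + s = \langle s \rangle$, shifting by $s$ permutes each coset to itself, and therefore $A + s = A$, i.e.\ $s \in \Stab(A)$.

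There is essentially no main obstacle here; the only step that requires a small check is the identification of an AP of length $n/\gcd(n,s)$ and common difference $s$ with a coset of $\langle s \rangle$ modulo $n$, which is immediate once one notes that $n/\gcd(n,s)$ is exactly $\ord(s)$ in $\Z_n$, so that the elements $0, s, 2s, \ldots, (n/\gcd(n,s)-1)s$ are pairwise distinct and exhaust $\langle s \rangle$. I would state this briefly as a lemma-free remark rather than set it up formally, since the proposition's whole content is this dictionary between shift-invariance and coset structure.
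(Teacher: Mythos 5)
Your proof is correct and follows essentially the same route as the paper: both directions hinge on the additive order of $s$ in $\Z_n$ being $n/\gcd(n,s)$, and the full-length arithmetic progressions you describe as cosets $a+\langle s\rangle$ are exactly the sets $A_i$ the paper constructs explicitly by iterating the shift. Your coset phrasing is just a slightly more group-theoretic packaging of the same argument.
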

\begin{proof}
First assume that $A = A_1 \cup A_2 \cup \ldots \cup A_r$, where each $A_i$ is an arithmetic progression with common difference $s$ and having length $\frac{n}{\gcd(n,s)}$. Pick a ``first term" in each $A_i$ and denote it by $a_i$. (This choice is arbitrary since we are working modulo $n$.) Note that the additive order of $s$ mod $n$ is equal to the cardinality of $A_i$, i.e. $\frac{n}{\gcd(n,s)}$, and so $a_i + j\cdot s \mod n \in A_i$ for all $j \geq 0$. In other words, we must have $A_i+s = A_i$, for every index $i\in \{1, \ldots, r\}$. Therefore, $A+s = A$.

Conversely, assume that $A+s = A$, and pick $a_1 \in A$. Again, since the additive order of $s$ mod $n$ is equal to $\frac{n}{\gcd(n,s)}$, we must have $a_1 + \frac{n}{\gcd(n,s)} \cdot s = a_1$, and $A_1:= \{a_1, a_1+s, \ldots, a_1 + \left(\frac{n}{\gcd(n,s)}-1 \right)\cdot s\} \subseteq A$. If $A=A_1$, then the proof is complete. If not, we have some $a_2 \in A\setminus A_1$, so $A_2:= \{a_2, a_2+s, \ldots, a_2 + (\frac{n}{\gcd(n,s)}-1)\cdot s\} \subseteq A$, and $A_2 \cap A_1 = \emptyset$. Continuing in this manner, we obtain, in a finite number of steps,  $A = A_1 \cup A_2 \ldots \cup A_r$, where each $A_i$ is an arithmetic progression with common difference $s$ and having length $\frac{n}{\gcd(n,s)}$. This completes the proof.
\end{proof}

In particular, the result also holds for a generator of the stabilizer group.

\begin{corollary}
Let $A \subseteq \Z_n$, $\card{A}=k$, $Stab(A)=\langle s_0\rangle$. Then for  $r=\frac{k \gcd(n,s_0)}{n} \in \Z$ we have  \begin{equation}
    A = \bigcup_{i=1}^{r} \left( a_i + \Stab(A) \right), \label{eq:AP-union}
\end{equation} 
for some $a_1,\ldots,a_r \in \Z_n$. Consequently, $\card{Stab(A)}$ divides $k$, and thus also $\gcd(k,n)$. \label{cor:AP-union}
\end{corollary}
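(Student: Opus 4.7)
The plan is to apply Proposition \ref{prop:AP} to a carefully chosen generator of $\Stab(A)$. Since $\Stab(A)$ is a subgroup of the cyclic group $\Z_n$, it is itself cyclic, say $\Stab(A) = \langle s_0 \rangle$ for some $s_0 \mid n$, and its order equals $\frac{n}{\gcd(n,s_0)}$. I would first invoke Proposition \ref{prop:AP} with the choice $s = s_0$: this immediately gives a decomposition $A = A_1 \cup \cdots \cup A_r$, where each $A_i$ is an arithmetic progression with common difference $s_0$ and length exactly $\frac{n}{\gcd(n,s_0)} = \card{\Stab(A)}$.

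The key observation is then that each such arithmetic progression is nothing more than a coset of $\langle s_0 \rangle = \Stab(A)$ in $\Z_n$: writing $a_i$ for the ``first term'' chosen in $A_i$, the set $\{a_i, a_i + s_0, \ldots, a_i + (\card{\Stab(A)}-1)s_0\}$ equals $a_i + \Stab(A)$. This yields the claimed decomposition
\[
A = \bigcup_{i=1}^{r}\bigl(a_i + \Stab(A)\bigr).
\]
Distinct cosets are disjoint, so this union is disjoint, and a simple element count gives $\card{A} = r \cdot \card{\Stab(A)}$, i.e. $k = r \cdot \tfrac{n}{\gcd(n,s_0)}$, from which $r = \tfrac{k\gcd(n,s_0)}{n}$ follows.

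For the final divisibility statement, the relation $k = r \cdot \card{\Stab(A)}$ with $r \in \Z_{\geq 1}$ shows that $\card{\Stab(A)}$ divides $k$. Combined with the fact that $\card{\Stab(A)}$ already divides $n$ (being the order of a subgroup of $\Z_n$), we conclude that $\card{\Stab(A)}$ divides $\gcd(n,k)$.

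There is no real obstacle here: the argument is essentially bookkeeping once Proposition \ref{prop:AP} is in hand. The only point requiring a bit of care is recognizing that the arithmetic progressions provided by that proposition, when $s$ is taken to be a generator of $\Stab(A)$, are exactly the cosets of $\Stab(A)$, which is what allows us to read the decomposition \eqref{eq:AP-union} directly from it.
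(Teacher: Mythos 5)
Your proof is correct and follows essentially the same route as the paper: apply Proposition \ref{prop:AP} with $s=s_0$ a generator of $\Stab(A)$ and identify the resulting full-length arithmetic progressions as cosets $a_i+\Stab(A)$. You additionally spell out the disjointness count $k=r\cdot\card{\Stab(A)}$ that yields the divisibility claim, a step the paper leaves implicit.
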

\begin{proof}
Proposition \ref{prop:AP} implies that $s \in \Stab(A)$ if and only if $A$ is a union of $\frac{k \gcd(n,s)}{n} $ many full-length arithmetic progressions, each with common difference $s$. Thus, $r\in \Z$ and we obtain
\begin{equation*}
    A = \bigcup_{i=1}^{r} \left( a_i + \Stab(A) \right),
\end{equation*} 
for some $a_1,\ldots,a_r \in \Z_n$. We may assume that $a_1,\ldots,a_r \in \{0,\ldots,s-~1\}$. 
\end{proof}

As a consequence of the above result, we note that whenever $n$ and $k$ are co-prime, $\Stab(A)$ is trivial for all subsets $A \subseteq \Z_n$ of size $k$.

In the following, 
we relate the size of the stabilizer of $A$ with the number of parts in the consecutive block structure $\pi(A)$ of $A$ (Definition~\ref{cons-block}).

\begin{proposition}
Let $A \subseteq \Z_n$ with $\pi(A) = (D_1,D_2,\ldots,D_\ell)$. Then $\card{ \Stab(A)} $ divides $\ell$. \label{prop:Stab_blocks} 
\end{proposition}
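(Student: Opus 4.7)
The plan is to let $\Stab(A)$ act by translation on the set of consecutive blocks $\{D_1, \ldots, D_\ell\}$, show that this action is free, and conclude via orbit--stabilizer that $|\Stab(A)|$ divides $\ell$. The whole proof is essentially group-theoretic bookkeeping; no substantial calculation is required, and neither Proposition \ref{prop:AP} nor Corollary \ref{cor:AP-union} is needed.

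First, I would check that the action is well defined: for $s \in \Stab(A)$, shifting any maximal consecutive block of $A$ by $s$ again produces a maximal consecutive block. A subset $D = \{d, d+1, \ldots, d+m-1\}$ of $A$ is a maximal consecutive block exactly when $D \subseteq A$, $d-1 \notin A$ and $d+m \notin A$, and all three conditions are preserved under the replacement $A \mapsto s + A = A$. Hence $\{D_1 + s, \ldots, D_\ell + s\} = \{D_1, \ldots, D_\ell\}$ as sets of subsets, so $s$ induces a permutation of the $\ell$ blocks and we get a genuine action of $\Stab(A)$ on $\{D_1, \ldots, D_\ell\}$.

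Next, I would show that this action is free whenever $A$ is a proper subset of $\Z_n$. Under that assumption every block has length $m = |D_i| \leq |A| < n$, so each $D_i$ is a proper cyclic arc in $\Z_n$, and a proper cyclic arc of length $m$ is uniquely determined by its starting element. Therefore $s + D_i = D_i$ forces $d_i + s \equiv d_i \pmod n$, i.e.\ $s = 0$. By orbit--stabilizer, every $\Stab(A)$-orbit on the set of blocks then has size exactly $|\Stab(A)|$, and partitioning $\{D_1, \ldots, D_\ell\}$ into orbits yields $|\Stab(A)| \mid \ell$.

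The main difficulty is essentially edge-case bookkeeping. The case $A = \emptyset$ is vacuous since $\ell = 0$ and every positive integer divides $0$. The case $A = \Z_n$ is the only genuine exception (there $\ell = 1$ and $|\Stab(A)| = n$, so the statement fails for $n > 1$); the freeness step breaks precisely because an arc of length $n$ is invariant under every shift. In the intended application to cyclic matroids every basis has size $k < n$, so the hypothesis $A \subsetneq \Z_n$ holds automatically and the argument applies as stated.
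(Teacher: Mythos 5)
Your proof is correct and is essentially the paper's argument in different language: the paper defines an equivalence relation on the blocks via shifts by multiples of a generator $s$ of $\Stab(A)$ and shows each class has exactly $\card{\Stab(A)}$ elements, which is precisely your free action of $\Stab(A)$ on the blocks followed by orbit--stabilizer. Your explicit verification of well-definedness and your flagging of the degenerate case $A=\Z_n$ (where the blocks are not proper arcs and the statement fails for $n>1$) is a small point of extra care that the paper's proof passes over silently, but it does not change the substance of the argument.
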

\begin{proof} Let $\Stab(A) = \langle s \rangle$. If $s = 0$, then the statement follows immediately. 

 Assume $s \neq 0$ and let $r = n/s = |\Stab(A)|$. We define the following relation on the set $\{D_1,\ldots,D_\ell\}$:
 \[ D_i \sim D_j \iff D_i = D_j + t s\ \mbox{for some }\ t \in \{0,\ldots,r-1\}.\] It is easy to check that $\sim$ is an equivalence relation. We show that each equivalence class contains exactly $r$ elements. For any $i \in \{1,\ldots,\ell\}$ and $t \in \{0,\ldots,r-1\}$, we have that $D_i + t s \in \{D_1,\ldots,D_\ell\}$, because $A+ts = A$. Moreover, if $D_i + t_1s = D_i + t_2 s$ for some $t_1,t_2 \in \{0,\ldots,r-1\}$, then $t_1 = t_2$. This implies that each equivalence class contains $r$ elements. Hence, $r$ divides $\ell$.
 \end{proof}

We know from Proposition~\ref{systematic} that a cyclic $k$-matroid over $\Z_n$ contains bases of the form $B_i = \{i,i+1,\ldots,i+k-1\}$ for all $i \in \{0,\ldots,n-1\}$. Clearly, all these bases belong to the same orbit, as $B_i = B_0 + i$ for each~$i$. 
Moreover, using Lemma \ref{lem:basis_exch} we get that, for each subset $Q \subset B_0$, there exists a basis of the form $(B_0 \setminus Q) \cup P$ for some $P \subset B_0^C$. 
As an application of Proposition \ref{prop:Stab_blocks}, we obtain the following bound on the size of the stabilizer of such sets.

\begin{corollary} Let $A = (B_0 \setminus Q) \cup P \subseteq \Z_n$, where $Q \subset B_0$ and $P \subset B_0^C$ with $\card{Q} = \card{P}$. Then $\card{Stab(A)} \leq 2 \card{Q}+1$.
\label{cor:stab_basis}
\end{corollary}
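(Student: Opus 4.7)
The plan is to apply Proposition \ref{prop:Stab_blocks}, which says $\card{\Stab(A)}$ divides the number $\ell$ of parts in the consecutive block structure $\pi(A)$. Since divisibility implies inequality, it suffices to prove that $\ell \le 2\card{Q}+1$, and the bound on $\card{\Stab(A)}$ follows.

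To bound $\ell$, I would count the consecutive blocks of $A$ separately within the two intervals $B_0 = \{0,1,\ldots,k-1\}$ and $B_0^C = \{k,k+1,\ldots,n-1\}$, since these two sets partition $\Z_n$. Set $r = \card{Q} = \card{P}$. The set $A \cap B_0 = B_0 \setminus Q$ consists of the single block $B_0$ with $r$ elements removed; an easy induction shows that removing $r$ elements from a contiguous block yields at most $r+1$ maximal consecutive sub-blocks (each removed interior element can split one existing block into two, while removing an endpoint merely trims a block). On the other side, $A \cap B_0^C = P$ has $r$ elements, which a priori form at most $r$ maximal consecutive blocks within $B_0^C$. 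Adding these, $A$ has at most $(r+1) + r = 2r+1$ maximal consecutive blocks when one ignores possible mergers at the interface points $k-1/k$ and $n-1/0$. Since such mergers can only decrease $\ell$, we conclude $\ell \le 2r+1 = 2\card{Q}+1$.

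Combining this with Proposition \ref{prop:Stab_blocks} yields $\card{\Stab(A)} \mid \ell \le 2\card{Q}+1$, which is the desired inequality. There is essentially no obstacle here: the argument is a direct block count, and the only mild subtlety is handling the boundary mergers at $k-1/k$ and $n-1/0$, which is harmless because they only reduce $\ell$ and we are after an upper bound.
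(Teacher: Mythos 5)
Your proof is correct and follows essentially the same route as the paper: bound the number of blocks in the consecutive block structure of $A$ by $2\card{Q}+1$ and then invoke Proposition \ref{prop:Stab_blocks}. The only (immaterial) difference is that you count blocks of $B_0\setminus Q$ and $P$ directly in terms of $\card{Q}$, whereas the paper counts via the block structures of $Q$ and $P$ themselves.
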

\begin{proof}
Let $r$ and $s$ be the number of blocks in the consecutive block structure of $Q$ and $P$, respectively. Observe that the number of blocks in the consecutive block structure of $A$ takes one of the values in the set $\{r+s-1, r+s, r+s+1\}$. By Proposition \ref{prop:Stab_blocks}, $\card{\Stab(A)}$ divides this value, and is thus bounded above by the value $2 \card{Q}+1$, as required.
\end{proof}

\begin{example}
Let $A = (\{0,1,\ldots,k-1\} \setminus \{q\}) \cup \{p\}$ for some $q \in \{0,1,\ldots,k-1\}$ and $p \in \{k,k+1,\ldots,n-1\}$. Using Corollary \ref{cor:stab_basis}, we get that $\card{\Stab(A)} \in \{1,2,3\}$. Note that if $n$ is not a multiple of 2 or 3, then $\Stab(A)$ is trivial. We examine the case $\card{ \Stab(A)}>1$. We may assume that $2 \leq k \leq n/2$, as $\Stab(A) = \Stab(A^C)$.
\begin{enumerate}
    \item $\card{\Stab(A)} = 3$ if and only if $n=6, k=3,$ and $A = \{0,2,4\}$.
    
    In this case, the consecutive block structure of $A$ is 
    $$\pi(A) = (\{0,1,\ldots,q-~1\},\{q+1,\ldots,k-1\},\{p\}).$$
    Thus, $\Stab(A) = \langle n/3 \rangle$ if and only if the sizes of each block are equal and the shift by $n/3$ permutes them. This is possible only if $q=1, k = 3, p=4$ and $n=6$.  
    
    \item $\card{\Stab(A)} = 2$ if and only if $n$ is even, and $A = \{0,n/2\}$ or $A = \{1,n/2+1\}$. 
    
    In this case, $\card{A \cap \{0,1,\ldots,n/2-1\}} = \card{A \cap \{n/2,n/2+1,\ldots,n-1\}}$. Since $k \leq n/2$, we have that $\card{A \cap \{0,1,\ldots,n/2-1\}} \geq k-1$ and $\card{A \cap \{n/2,n/2+1,\ldots,n-1\}} \leq 1$. Therefore, $k = 2$ and hence $A = \{0,n/2\}$ or $A = \{1,n/2+1\}$.
\end{enumerate}
\end{example}

\subsubsection{Number of bases}\label{subsec:orbit-stab}

Let $n \geq 3$, $k \in \{2, \ldots, n-1\}$ and $\mM = (\Z_n,\mB)$ be a cyclic $k$-matroid. Then $\mB$ is closed under the action \eqref{gpaxnall} of $\Z_n$, and hence it is a union of some orbits of subsets of size $k$. The following two results are based on the observation that the block composition $c(s+A)$ is simply a cyclic shift of the block composition $c(A)$, hence they are equal as unordered sets. This leads to a bound on the number of distinct orbits of bases of the form $(B_0 \setminus Q) \cup P$, where $B_0 = \{0,1,\ldots, k-1\}, Q \subseteq B_0$ and $P \subseteq B_0^C$, which are contained in $\mB$.

\begin{theorem}
 There are at least $\left\lfloor \frac{k}{4} \right\rfloor +1$   
orbits of bases of the form $A = (B_0 \setminus \{q\})\cup \{p\} $, where $p \in B_0^C$. Consequently, there are at least $\frac{n}{\gcd(n,k)} \left( \left\lfloor \frac{k}{4} \right\rfloor +1 \right)$ bases in $\mB$.
\label{bound:singleton_orbits}
\end{theorem}
\begin{proof}
We first note that the block composition associated to $A =(B_0 \setminus \{q\})\cup \{p\}$ has the following possibilities.
\begin{equation*}
    c(A) = \begin{cases}
        (q, k-q-1,1) & \mbox{if}\ q\notin\{0,k-1\}, \ p \notin \{k,n-1\} \\
        (k-q-1, q+1) & \mbox{if}\ q\notin\{0,k-1\}, \ p = n-1\} \\
        (q, k-q) & \mbox{if}\ q\notin\{0,k-1\}, \ p = k \\
        (k-1,1) & \mbox{if}\ q=0, p \neq k \ \mbox{or}\ q=k-1, \ p \neq n-1 \\
        (k) & \mbox{if}\ q=0, p = k \ \mbox{or}\ q=k-1, \ p = n-1. 
     \end{cases}
\end{equation*}
Now suppose that $A =(B_0 \setminus \{q\})\cup \{p\} $ and $A^\prime = (B_0 \setminus \{q^\prime\})\cup \{p^\prime\} $ are in the same orbit, i.e. $s+ A = A^\prime$ for some $s$. Clearly, the block composition of $A^\prime$ is then a shift of $c(A)$. Thus, we have one of the following cases: \begin{enumerate}
        \item $\{q, k-q-1, 1\} = \{q^\prime, k-q^\prime-1, 1\}$, i.e. $q =q^\prime$ or $q+q^\prime = k-1$.
        \item $\{q+1, k-q-1\} = \{q^\prime+1, k-q^\prime-1\}$, i.e. $q =q^\prime$ or $q+q^\prime = k$.
        \item $\{q, k-q\} = \{q^\prime, k-q^\prime\}$, i.e. $q =q^\prime$ or $q+q^\prime = k+1$.
        \item $\{q,k-q\} = \{q^\prime+1,k-q^\prime-1\}$ or $\{q+1,k-q-1\} = \{q^\prime, k-q^\prime\}$, i.e. $q=q^\prime \pm 1$ or $q+q^\prime = k-1$.
        \item $q, q^\prime \in \{0,k-1\}$.
\end{enumerate}

Lemma \ref{lem:basis_trivial_intersect} shows that there exist $(n-k)$ bases not containing $q$. Thus, if we fix $q$ and pick $q^\prime$ so that $\card{q-q^\prime}> 1$ and $q+q^\prime < k-1$, then all the resulting bases give rise to distinct orbits. In particular, there is a distinct orbit corresponding to each of the $m+1$ values $q\in \{0,2,4, \ldots, 2\cdot m\}$, where $2m+2(m-1)<k-1\leq 2(m+1) +2m$, or $m =\left\lceil \frac{k-3}{4}\right\rceil = \left\lfloor \frac{k}{4} \right\rfloor$. 
The final result on the number of bases follows using Corollary \ref{cor:AP-union}, which implies that the size of an orbit is at least $n/\gcd(n,k)$.
\end{proof}

We further improve the lower bound on the number of orbits by considering the bases of the form $(B_0 \setminus Q) \cup P$, where $\card{Q} > 1$ and $P \subseteq B_0^C$.

\begin{theorem}
The total number of orbits in $\mB$ is bounded from below by $M+ \left\lfloor \frac{k}{4}\right\rfloor+1$, where 
 $$M= \left\lfloor\log_2 \left(\dfrac{\left\lfloor\frac{k}{2}\right\rfloor +1}{3}\right)\right\rfloor.$$ 
In particular, we have the following lower bound on the number of bases in $\mB$:   
\begin{align*}
    m_3(n,k) &=\left(M + \left\lfloor \frac{k}{4}\right\rfloor+1 \right) \frac{n}{\gcd(n,k)}. \end{align*} \label{bound:general_orbits}
\end{theorem}
\begin{proof}
Let $r$ and $s$ denote respectively the number of blocks in the consecutive block structure of $Q$ and $P$. It is easy to see that the number of blocks in the block structure of the basis element $A=(B_0\setminus Q) \cup P$ is at least $r+s-1$ and at most $r+s+1$. Now, if $\card{Q } \leq \left\lfloor k/2 \right\rfloor $, we can choose $Q$ so
that $r = \card{Q }$. Then $A$ has at least $r$ and at most $2 r + 1$ blocks in its decomposition. 

Consider the finite sequence $S=(x_1=1, x_2=4, \ldots, x_i, \ldots, x_\ell)$, where $x_i = 2 x_{i-1}+2$ for $2 \leq i\leq \ell$, and where $\ell$ is such that $x_\ell \leq \left\lfloor k/2 \right\rfloor $ and $2 x_\ell +2 \geq k/2$. Then for each pair of distinct $r,r^\prime \in S$, there are sets $Q_r$, $Q_{r^\prime}$ with $\card{Q_r } =r$, $\card{Q_{r^\prime} } =r^\prime$ that have respectively $r$ and $r^\prime$ blocks, and give rise to distinct orbits. Thus, the size $\ell$ of the sequence $S$ gives a lower bound for the number of orbits. 

We have, $x_i = 2 x_{i-1} + 2 = 2+ 2^2 +2^3+ \ldots + 2^{i-2} + 2^i = 3\cdot 2^{i-1} -2$. Moreover, for each index $i$, $3\cdot 2^{i-1}-2 = x_i \leq \left\lfloor k/2 \right\rfloor$, or $i \leq \log_2 \left(\dfrac{\left\lfloor k/2 \right\rfloor +2}{3}\right)+1$. Therefore we have
$$\ell = \left\lfloor\log_2 \left(\dfrac{\left\lfloor k/2 \right\rfloor +2}{3}\right)\right\rfloor+1.$$ From the discussion above, it is clear that the number of orbits arising from the case $\card{Q}>1$ is at least $\ell-1$. From Theorem \ref{bound:singleton_orbits} we also have the lower bound $\left\lfloor {k}/{4}\right\rfloor+1$ for the number of orbits for $\card{Q}  =1$. Plugging in this value then gives the result.
\end{proof}

\begin{remark}
Combining the results from Theorems \ref{thm:m1bound}, \ref{thm:m2bound} and \ref{bound:general_orbits}, we get three distinct lower bounds $m_1(n,k)$, $m_2(n,k)$ and $m_3(n,k)$ on the number of basis elements. We can further improve these bounds using the following two observations:
\begin{enumerate}
    \item The dual of a cyclic $k$-matroid is a cyclic $(n-k)$-matroid. Thus, $m_1(n,n-k), m_2(n,n-k)$ and $m_3(n,n-k)$ are also lower bounds on the number of bases in a cyclic $k$-matroid.
 
    \item When $\gcd(n,k) =1$, each orbit under the action \eqref{gpaxnall} has $n$ elements. Since the set of bases $\mB$ is a collection of orbits, the number of bases in $\mB$ in this case must be a multiple of $n$. So, the lower bound $m_i(n,k)$ can be improved to $\left\lceil \frac{m_i}{n}\right\rceil n$ for $i=1,2,3$.
   
\end{enumerate}
\end{remark}

From the above remark, we obtain the best possible lower bound $m_B$ from the discussed theory: 

\[m_B(n,k) = \begin{cases} 
\max \left \{ m_i(n,t) ~ \middle \vert ~  i \in \{1,2,3\}, t \in \{k,n-k\} \right \} & \mbox{if} \ \gcd(n,k) \neq 1 \\
\max \left \{ \left \lceil \dfrac{m_i(n,t)}{n} \right \rceil n  ~ \middle\vert ~  i \in \{1,2,3\}, t \in \{k,n-k\} \right \} & \mbox{if} \ \gcd(n,k) = 1.
\end{cases}\]

\subsection{Experimental Results}\label{sec:expt}
In the following discussion, we compare explicit values of the different bounds for some values of $n$ and $k$.  In Table \ref{tab:comparison_values}, we provide some values for each bound on the number of bases for different $n$ and $k$. Note that in the case of $n=6, \ k=3$, the bound 8 is achieved exactly in the cyclic matroid 1 of Example \ref{example:n6}. We graphically show in Figures \ref{fig:nfixed} and \ref{fig:kfixed} the variation of the bounds $m_1$, $m_2$ and $m_3$ with different values of, and relationships between, $n$ and $k$.

Observe that the bounds $m_1$ and $m_2$ by  far exceed $m_3$ for the  ``middle" values of $k$, whereas $m_3$ becomes significant when $k$ is large enough, particularly when $k$ attains its maximum value $k=n-2$ (We disregard the case $k=n-1$ since this only gives the uniform matroid). It is also observed that the difference between $m_2$ and the other two bounds increases rapidly as $n$ is increased. This is to be expected since $m_2$ counts more types of bases than $m_1$ and $m_3$, and the possibilities for these types grow rapidly with $n$.

\begin{table}[htb]
\begin{subtable}[h]{\textwidth}
    \centering
    \begin{tabular}{@{}|c|c|c|c|c|c|c||c|@{}}

    \toprule
    $k$ & $m_1(n,k)$ & $m_1(n,n-k)$ & $m_2(n,k) $ & $m_2(n,n-k)$ & $m_3(n,k)$ & $m_3(n,n-k)$ & $m_B$\\ \midrule
    2 & 8 & 8 & 8 & 4 & 3 & 6  & 8  \\
    3 & 8 & 8 & 8 & 8  & 2 & 2 & 8 \\ \bottomrule
    \end{tabular} 
    \caption{$n=6$}
    \label{tab:my_label1}
\end{subtable}  \\[10pt]
    
\begin{subtable}[h]{\textwidth}
\centering
\begin{tabular}{|c|c|c|c|c|c|c||c|}

\toprule
$k$ & $m_1(n,k)$ & $m_1(n,n-k)$ & $m_2(n,k) $ & $m_2(n,n-k)$ & $m_3(n,k)$ & $m_3(n,n-k)$ & $m_B$\\ \midrule
2 & 19 & 18 & 25 & 29 & 11 & 44 & 44  \\
3 & 27 & 28 & 29 & 26  & 11 & 44& 44 \\
4 & 35 & 40 & 25  & 8 & 22 & 22 & 44 \\
5 & 47 & 48 & 100 & 2 & 22 & 22 & 110  \\ \bottomrule
\end{tabular} 
    \caption{$n=11$}
    \label{tab:my_label2}
        \end{subtable} \\[10pt]
        
\begin{subtable}[h]{\textwidth}
\centering
\begin{tabular}{@{}|c|c|c|c|c|c|c||c|@{}}

\toprule
$k$ & $m_1(n,k)$ & $m_1(n,n-k)$ & $m_2(n,k) $ & $m_2(n,n-k)$ & $m_3(n,k)$ & $m_3(n,n-k)$ & $m_B$\\ \midrule
2 & 27 & 26 & 42 & 67 & 15 & 75 & 75  \\
3 & 40 & 44 & 63 & 33  & 5 & 25& 130 \\
4 & 60 & 72 & 69  & 99 & 30 & 60 & 105 \\
5 & 85 & 112 & 112 & 32 & 6 & 12 & 112 \\
6 & 124 & 160 & 223  & 8 & 10 & 20 & 223 \\
7 & 156 & 192 & 518 & 2 & 30 & 60 & 525 \\ \bottomrule
\end{tabular}  
    \caption{$n=15$}
    \label{tab:my_label3}
\end{subtable}
    
    \caption{Comparison of the lower bounds on the number of bases in an arbitrary cyclic $k$-matroid over a ground set of size $n$.}\label{tab:comparison_values}
    
    \end{table}

\begin{figure}[H]
    \centering
    \begin{subfigure}[b]{0.33\textwidth}
        \includegraphics[width=5.9cm, height=5cm]{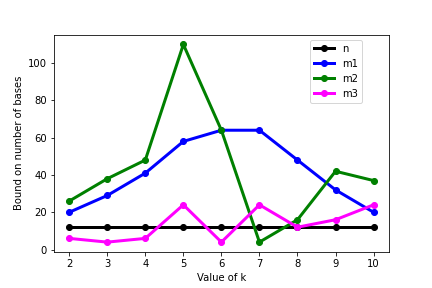}
        \caption{$n=12$}
    \end{subfigure}
    \begin{subfigure}[b]{0.32\textwidth}
        \includegraphics[width=5.9cm, height=5cm]{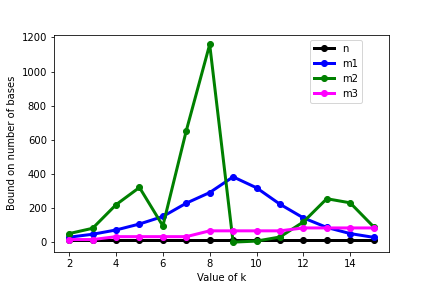}
        \caption{$n=17$}%
    \end{subfigure}
    \begin{subfigure}[b]{0.33\textwidth}
        \includegraphics[width=5.9cm, height=5cm]{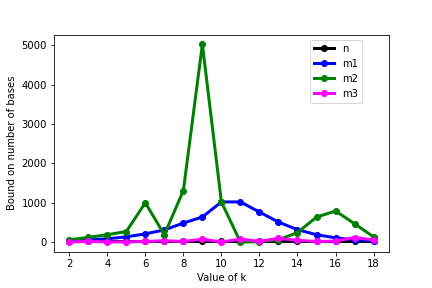}
        \caption{$n=20$}%
    \end{subfigure}
    \caption{Comparison of the bounds for fixed values of $n$ and varying $k$.}
    \label{fig:nfixed}
\end{figure}

\begin{figure}[H]
    \centering
    \begin{subfigure}[b]{0.33\textwidth}
        \includegraphics[width=5.5cm, height=4.5cm]{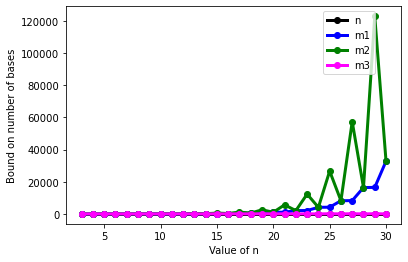}
        \caption{$n \leq 30, \ k=\lfloor\frac{n}{2}\rfloor$}%
    \end{subfigure}
    \begin{subfigure}[b]{0.32\textwidth}
        \includegraphics[width=5.9cm, height=5cm]{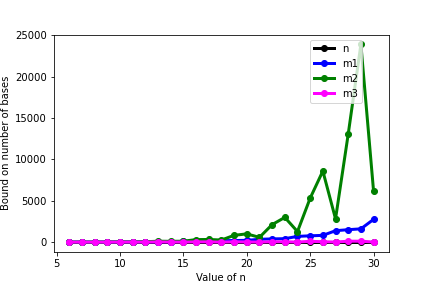}
        \caption{$n \leq 30, \ k=\lfloor\frac{n}{3}\rfloor$}%
    \end{subfigure}
    \begin{subfigure}[b]{0.33\textwidth}
        \includegraphics[width=5.9cm, height=5cm]{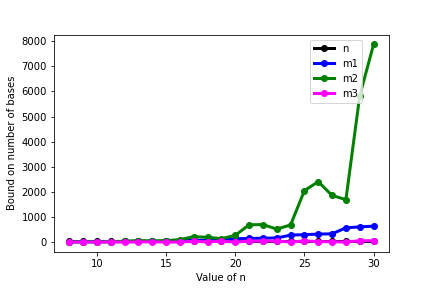}
        \caption{$n \leq 30, \ k=\lfloor\frac{n}{4}\rfloor$}%
    \end{subfigure} \\
    
        \begin{subfigure}[b]{0.33\textwidth}
        \includegraphics[width=5.9cm, height=5cm]{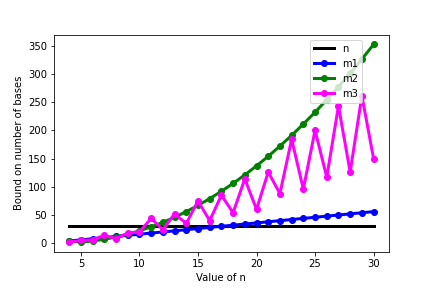}
        \caption{$n \leq 30, \ k=n-2 $}%
    \end{subfigure}
    \begin{subfigure}[b]{0.32\textwidth}
        \includegraphics[width=5.9cm, height=5cm]{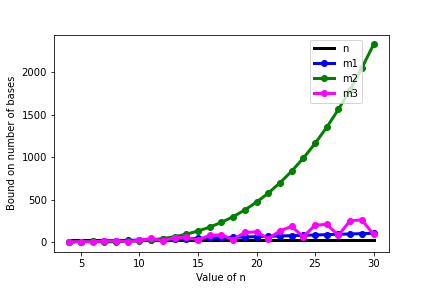}
        \caption{$n \leq 30, \ k=n-3$}%
    \end{subfigure}
    \begin{subfigure}[b]{0.33\textwidth}
        \includegraphics[width=5.9cm, height=5cm]{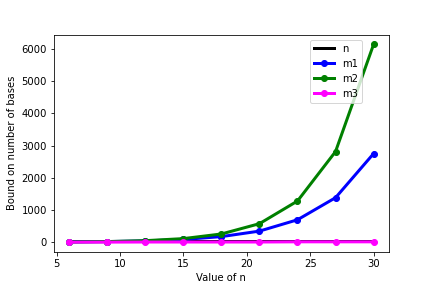}
        \caption{$n \leq 30, \ 3 \mid n, \ k=n/3$}%
    \end{subfigure} 
    \caption{Comparison of the bounds for fixed values of $k$ and varying $n$.}
    \label{fig:kfixed}
\end{figure}

In \ref{appendix:examples} we provide some explicit examples of cyclic matroids, generated through a randomized computer search.

\section{Algebraic and Geometric Connections}\label{sec:examples}

In this section, we provide examples of algebraic, geometric and combinatorial objects that may be linked to cyclic matroids. The main objects of interest are the ones introduced in Sections \ref{subsec:projplanes}, \ref{subsec:codes} and \ref{subsec:knorm}. 

\subsection{Cyclic projective planes and cyclic codes}

There are many works on the existence and non-existence of cyclic projective planes and their collineation groups. We refer the interested reader to \cite{hall1947cyclic, berman1953finite, rosati1957piani}. Moreover, cyclic projective codes have been studied in relation with designs, difference sets and cyclic codes; see \cite[Section 8.7]{huffman2010fundamentals}.

Let $q$ be a prime power and consider a Desarguesian projective plane $\PG(2,q)$ and its incidence matrix $A$. Then it is known that $A$ is necessarily a circulant matrix; see for instance \cite[Theorem 4.2.2 and its Corollary]{hirschfeld1998projective}. Since the entries of $A$ are only $0$s and $1$s, we can consider $A$ as a matrix over a finite field $\F_p$ for $p$ prime. In this case, the rank of $A$ has been completely determined by Graham and MacWilliams in \cite{graham1966number}. Here we state their result only for $p=2$.

\begin{theorem}
The rank over $\F_2$ of the incidence matrix $A$ of a Desarguesian projective plane $\PG(2,q)$  is 
$$k=\begin{cases}
q^2+q & \textnormal{ if $q$ is odd } \\
3^t+1 & \textnormal{ if $q=2^t$}.
\end{cases}$$
\end{theorem}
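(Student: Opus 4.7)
The plan is to exploit two structural features of the incidence matrix $A$ of $\PG(2,q)$: the identity $AA^T = qI + J$, which follows from the projective plane axioms (each line contains $q+1$ points, any two distinct lines meet in exactly one point), and the circulant structure of $A$ obtained by ordering points and lines along a Singer cycle (Desarguesian planes are always cyclic, by Singer's theorem, so Proposition~\ref{prop:collgroup} applies). Throughout, I set $n = q^2+q+1$ and recall that $n$ is always odd.

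For $q$ odd, the argument is purely linear-algebraic over $\F_2$: the identity reduces to $AA^T = I + J$. Since $n$ is odd, $J\mathbf{1} = \mathbf{1}$ in $\F_2$ and hence $(I+J)\mathbf{1} = 0$; conversely, for any $v$ with $\mathbf{1}^\top v = 0$ one has $Jv = 0$ and $(I+J)v = v$. Thus $\rk_{\F_2}(I+J) = n - 1 = q^2+q$, which forces $\rk_{\F_2}(A) \geq q^2+q$. For the matching upper bound, each row of $A$ sums to $q+1$, which is even, so $\mathbf{1}$ lies in the right null space of $A$ and $\rk_{\F_2}(A) \leq q^2+q$. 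Equality follows.

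For $q = 2^t$ the identity collapses to $AA^T = J$ over $\F_2$ and loses its force, so one must instead analyze the circulant structure directly. Identifying points with $\Z_n$ via the Singer cycle realizes $A$ as a circulant matrix whose generating polynomial $D(x) = \sum_{d \in D} x^d \in \F_2[x]$ is the indicator of a Singer difference set $D \subset \Z_n$ of size $q+1$. Since $\gcd(n,2) = 1$, the polynomial $x^n - 1$ splits into distinct irreducibles over $\F_2$ indexed by the 2-cyclotomic cosets of $\Z_n$, and $\rk_{\F_2}(A) = n - \deg \gcd(D(x),\, x^n - 1)$; equivalently, the rank equals the number of $n$-th roots of unity $\zeta \in \overline{\F_2}$ for which $\sum_{d \in D} \zeta^d \neq 0$. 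I would realize $D$ explicitly via $D = \{d : \mathrm{Tr}_{\F_{q^3}/\F_q}(\alpha^d) = 0\}$ for a generator $\alpha$ of $\F_{q^3}^*$, and evaluate these character sums as Gauss sums in an extension of $\F_2$. The main obstacle is the classification of the vanishing cases: this is a delicate 2-adic analysis on base-$2$ digits of the exponents (the digit-counting mechanism underlying Hamada's formula for the $p$-rank of a classical design), and it is precisely this digit bookkeeping that forces the final count $3^t + 1$. The two preceding steps — the linear-algebraic lower bound and the Singer reformulation — are by comparison routine.
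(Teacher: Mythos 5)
First, a point of comparison: the paper does not prove this statement at all --- it is quoted from Graham and MacWilliams, so there is no internal proof to measure you against; your attempt has to stand on its own. Your treatment of the odd case does stand on its own and is complete: $AA^T = qI+J$ reduces to $I+J$ over $\F_2$, the kernel of $I+J$ is spanned by the all-ones vector because $n=q^2+q+1$ is odd, so $\rk_{\F_2}(A)\geq \rk_{\F_2}(AA^T)=n-1$, and the upper bound follows since each point lies on $q+1$ lines with $q+1$ even, giving $A\mathbf{1}=0$. That is a correct and self-contained argument for $k=q^2+q$ when $q$ is odd, and your reduction of the even case to a circulant/Singer difference set picture (rank equals the number of $n$-th roots of unity $\zeta$ over $\overline{\F}_2$ at which the difference-set polynomial does not vanish, valid because $\gcd(n,2)=1$) is also correct as far as it goes.

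The genuine gap is that the case $q=2^t$ --- the only case where the answer $3^t+1$ is not forced by soft linear algebra --- is never actually established. You identify the right machinery (Gauss sums for the trace-zero Singer difference set, Stickelberger-type base-$2$ digit counting as in Hamada's formula), but you explicitly defer the classification of which character sums $\sum_{d\in D}\zeta^d$ vanish, and that classification is precisely where the number $3^t+1$ comes from: one must show that the nonvanishing $\zeta^j$ are exactly those whose exponent $j$ has a base-$2$ digit structure counted by $3^t$ (plus the trivial character), and nothing in your sketch rules out more or fewer vanishing sums. Note also that your $AA^T=J$ observation gives only $\rk_{\F_2}(A)\geq \rk_{\F_2}(J)=1$, so no useful lower bound survives in characteristic $2$; everything rests on the uncompleted digit analysis. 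As written, the proposal proves the odd half of the theorem and reformulates, but does not prove, the even half.
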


\begin{corollary}
Let $\mM(A)$ be the representable matroid constructed from the incidence matrix $A$ of a cyclic projective plane. Then, $\mM(A)$ is a  cyclic $k$-matroid, representable over $\F_2$, with $k$ equal to the rank of $A$.
\end{corollary}

Moreover, Pless showed also that the incidence matrix of a Desarguesian cyclic projective plane generates a binary cyclic code; see \cite{pless1986cyclic}. Hence, this class of cyclic matroids is a subclass of the one deriving from cyclic codes. 

Let $\mC$ be a $k$-dimensional linear code in $\F_q^n$. Then, using a generator matrix $G$ of $\mC$, we can associate a representable matroid $\mM_{\mC}=\mM(G)$. 

\begin{proposition}\label{prop:cyclic-codes}
Let $\mC$ be an $[n,k]_q$ cyclic code with generator matrix $G$. Then $\mM(G)$ is a cyclic $k$-matroid.
\end{proposition}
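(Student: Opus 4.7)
The plan is to exhibit the cyclic shift permutation on the ground set as an automorphism of $\mM(G)$, so that $\Z_n \leq \Aut(\mM(G))$. The matroid has rank $k$ automatically, since $\mathrm{rank}(G)=\dim_{\F_q}\mC = k$, so once we establish the automorphism, we conclude $\mM(G)$ is a cyclic $k$-matroid in the sense of the definition.

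The first step is to translate ``cyclic shift of the ground set'' into a matrix operation on $G$. Index the columns of $G$ by $\{0,1,\ldots,n-1\}$, and let $\sigma$ be the $n$-cycle $i \mapsto (i+1)\bmod n$. If $P_\sigma$ denotes the corresponding $n \times n$ permutation matrix acting on the right, then $GP_\sigma$ is the matrix whose columns are a cyclic reshuffling of the columns of $G$. Consequently, for any $S \subseteq \{0,\ldots,n-1\}$, the columns of $GP_\sigma$ indexed by $\sigma(S)$ are precisely (up to order) the columns of $G$ indexed by $S$.

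The second step uses the hypothesis that $\mC$ is cyclic. Each row of $GP_\sigma$ is the cyclic shift $\mathrm{sh}$ of the corresponding row of $G$; since every row of $G$ lies in $\mC$ and $\mC$ is closed under $\mathrm{sh}$, we conclude $\mathrm{rowsp}(GP_\sigma) \subseteq \mC$. Because $GP_\sigma$ has the same rank $k$ as $G$, this inclusion is an equality, so $GP_\sigma$ is another generator matrix of $\mC$. Any two generator matrices of the same $[n,k]_q$ code differ by an element of $\GL_k(\F_q)$, so there exists $U \in \GL_k(\F_q)$ with $GP_\sigma = UG$.

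The final step is a direct linear algebra conclusion. For any $S = \{i_1,\ldots,i_k\}$, the columns of $G$ indexed by $\sigma(S)$ span a space of the same dimension as the columns of $GP_\sigma$ indexed by $S$, which by $GP_\sigma = UG$ has the same dimension as the columns of $G$ indexed by $S$ (left multiplication by an invertible matrix preserves rank). Hence $S$ is independent in $\mM(G)$ if and only if $\sigma(S)$ is independent; equivalently, $S$ is a basis iff $\sigma(S)$ is a basis. Thus $\sigma \in \Aut(\mM(G))$, giving $\Z_n = \langle \sigma \rangle \leq \Aut(\mM(G))$ and completing the proof. There is no real obstacle here; the only conceptual point is the passage from ``$\mC$ is cyclic'' to ``$GP_\sigma$ generates the same code'', from which $GP_\sigma = UG$ is immediate and makes column independence manifestly shift-invariant.
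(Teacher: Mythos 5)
Your proof is correct and follows essentially the same route as the paper: the paper's argument is exactly that cyclicity of $\mC$ makes the row-shifted matrix $\sh(G)$ another generator matrix of $\mC$, so the shift of every basis is a basis; you simply spell out the details via $GP_\sigma = UG$ with $U \in \GL_k(\F_q)$. The only blemish is a harmless index-direction slip in the last step (under your stated convention, the columns of $GP_\sigma$ indexed by $S$ are those of $G$ indexed by $\sigma^{-1}(S)$, not $\sigma(S)$), which does not affect the conclusion since $\sigma^{-1}$ also generates $\Z_n$.
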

\begin{proof}
The bases in the matroid $\mM(G)$ (see Example~\ref{rep-mat} for the definition of $\mM(G)$) correspond to the sets of indices of $k$ linearly independent columns. Observe that since $\mC$ is cyclic, then the matrix $\sh(G)$ obtained by shifting on the right every row of $G$ is still a generator matrix for $\mC$. In particular, the cyclic shift of each basis is still a basis.
\end{proof}

\begin{remark}\label{rem:cyclicity}
It is immediate to see that the cyclic property is not invariant under permutation of the coordinates. Hence, in general, it is necessary to find an appropriate relabelling of the points of the matroid, in order to obtain a cyclic one. The same property it is not invariant for cyclic codes, i.e. cyclicity is not preserved under permutation of columns.
\end{remark}

\begin{remark}
Note that in general the cyclic matroid arising from cyclic codes do not satisfy the cyclicity property defined by Welsh. Indeed, for a binary cyclic code $\mC$ of odd length $n$, with $n\geq 3$, it is not difficult to see that the automorphism group of $\mC$ strictly contains $\Z_n$; see \cite{bienert2010automorphism}.
\end{remark}

\begin{example}\label{fano}
Consider the simplex code $[7,3]_2$ with generator matrix 
$$ G=\begin{bmatrix}
1 & 0 & 1 & 1 & 1 & 0 & 0 \\
0 & 1 & 0 & 1 & 1 & 1 & 0 \\
0 & 0 & 1 & 0 & 1 & 1 & 1 
\end{bmatrix}.$$
$G$ is clearly the generator matrix of a cyclic code. Moreover, the matroid $\mM(G)$ associate to it is the well-known \textbf{Fano matroid}, whose name derives from the Fano plane $\mathrm{PG}(2,2)$. This is denoted by $F_7$, the ground set is $E=\{0,1,\dots,6\}$ and the set of bases is 
\begin{align*} 
\mB=
\{&\{ 0, 3, 6 \},\{ 0, 2, 5 \},\{ 0, 2, 4 \},\{ 3, 4, 5 \},\{ 0, 2, 3 \},\{ 0, 1, 5 \},\{ 1, 2, 5 \}, \\
& \{ 2, 3, 6 \},\{ 0, 1, 4 \},\{ 0, 4, 6 \},\{ 1, 3, 5 \},\{ 2, 5, 6 \},\{ 1, 3, 6 \},\{ 0, 3, 5 \}, \\
& \{ 2, 4, 5 \},\{ 1, 2, 3 \},\{ 3, 5, 6 \},\{ 0, 1, 2 \},\{ 0, 1, 6 \},\{ 2, 3, 4 \},\{ 0, 5, 6 \}, \\ &\{ 0, 3, 4 \},\{ 1, 2, 6 \},\{ 1, 3, 4 \},\{ 1, 4, 5 \},\{ 4, 5, 6 \},\{ 2, 4, 6 \},\{ 1, 4, 6 \}\},\end{align*}
and it is not difficult to see that it satisfies the property of cyclic $3$-matroids. 
It can be graphically represented as in Figure \ref{fig:fanomatroid}, where each base is made of three points that are not collinear.
\begin{figure}[H]
\begin{center}
\begin{tikzpicture}
\tikzstyle{point}=[ball color=white, circle, draw=black, inner sep=0.1cm]
\node (v7) at (0,0) [point] {4};
\draw (0,0) circle (1cm);
\node (v1) at (90:2cm)  [point] {6};
\node (v2) at (210:2cm) [point] {0};
\node (v4) at (330:2cm) [point] {1};
\node (v3) at (150:1cm) [point] {2};
\node (v6) at (270:1cm) [point] {3};
\node (v5) at (30:1cm) [point] {5};
\draw (v1) -- (v3) -- (v2);
\draw (v2) -- (v6) -- (v4);
\draw (v4) -- (v5) -- (v1);
\draw (v3) -- (v7) -- (v4);
\draw (v5) -- (v7) -- (v2);
\draw (v6) -- (v7) -- (v1);
\end{tikzpicture}
\end{center}
    \caption{Cyclic Fano matroid $F_7$}
    \label{fig:fanomatroid}
\end{figure}
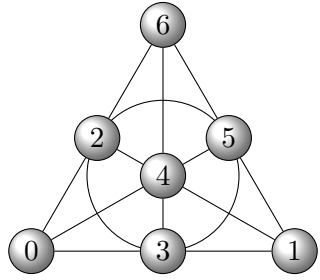

\end{example}

We do not know if in general the converse of Proposition \ref{prop:cyclic-codes} is true or not. We state this as an open problem.

\begin{problem}
Verify the converse of Proposition \ref{prop:cyclic-codes}, i.e. given a representable cyclic $k$-matroid $\mM$ on $n$ elements, determine if there exists a field $\F_q$ and an $[n,k]_q$ cyclic code $\mC$ with $\mM_\mC = \mM$.
\end{problem}

In some cases, experimental results show that the answer to the previous open problem should be positive, as the next example illustrates.

\begin{example} \label{example:n6}
Let $n=6$ and $k=3$. An exhaustive computer program shows that there are exactly three cyclic matroids on the ground set $E = \{0,1,2,3,4,5\}$ and having rank $3$. Each of the three matroids corresponds to a cyclic code. 
\begin{enumerate}
    \item $\mM_1$ with $8$ bases elements, comprising of $6$ bases from the orbit of $\{0,1,2\}$ and $2$ bases from the orbit of $\{0,2,4\}$. \\For $\mC_1 = \langle x^3 + 1 \rangle \subseteq \F_2[x]/\langle x^6-1 \rangle$, we get $\mM_1 = \mM_{\mC_1}$.
    \item $\mM_2$ with $18$ bases elements, comprising of $6$ bases from the orbit of $\{0,1,2\}$, $6$ bases from the orbit of $\{0,1,3\}$, and $6$ bases from the orbit of $\{0,1,4\}$. \\For $\mC_2 = \langle x^3 + 2x^2+2x+ 1 \rangle \subseteq \F_3[x]/\langle x^6-1 \rangle$, we get $\mM_2 = \mM_{\mC_2}$.
    \item $\mM_3$ with $20$ bases elements, comprising of all the $20$ subsets of size $3$. \\For $\mC_3 = \langle x^3 + 2x^2 + 2x + 1 \rangle \subseteq \F_5[x]/\langle x^6-1 \rangle$, we get $\mM_3 = \mM_{\mC_3}$.
\end{enumerate}
\end{example}


\subsection{$k$-Normal Elements}

In this subsection we establish a connection between $(n-k)$-normal elements and cyclic $k$-matroids. The connection between matroids and $k$-normal elements has never been observed before, to the best of our knowledge.

Given an $(n-k)$-normal element $\a \in  \fqn$,  let
$$V:=\mathrm{span}_\fq \left\{\alpha, \alpha^q, \ldots, \alpha^{q^{n-1}} \right\} $$ 
be the $k$-dimensional span over $\fq$ of the conjugates of $\a$. We may associate a matroid $\mM = (\Zn, \mB)$ on $n$ symbols to $\a$ as follows.
Let $\mI$ be a collection of subsets of $\Z_n$ such that $S\in\mI$ if and only if the set of powers $\{\a^{q^i} \mid i \in S\}$ is linearly independent over $\fq$. Then $\mI$ clearly satisfy the axioms (I1)--(I3), hence it is the collection of independent sets of a matroid. In particular, the collection of bases of such a matroid is defined as 
$$\mB = \{\{i_1, \ldots, i_k\} \subseteq \Z_n \mid \{\a^{q^{i_1}}, \a^{q^{i_2}}, \ldots, \a^{q^{i_k}}\} \text{ is a linear basis of } V  \text{ as vector space over } \fq\}.$$
\begin{proposition}
The matroid $\mM= (\Zn, \mB)$ associated to an $(n-k)$-normal element $\a\in\F_q^n$ is a cyclic $k$-matroid.
\end{proposition}
\begin{proof}
Since the first $k$ powers of $\a$ must be linearly independent in order for all of them to span a $k$-dimensional vector space, we must have $\{0,1, \ldots, k-1\} \in \mB$. Further, for any $s \in \Z_n$,  $\{\a^{q^{i_1}}, \a^{q^{i_2}}, \ldots, \a^{q^{i_k}}\}$ is linearly independent if and only if $\{\a^{q^{i_1+s}}, \a^{q^{i_2+s}}, \ldots, \a^{q^{i_k+s}}\}$ is linearly independent, by the properties of the Frobenius automorphism. Thus, $\mM$ is a cyclic $k$-matroid.
\end{proof}

In \cite{tinani_rosenthal_2021}, it was left as an open problem to determine which subsets of $\{\alpha, \alpha^q, \alpha^{q^2}, \ldots, \alpha^{q^{n-1}}\}$ of size $k$ or smaller, apart from $\{\alpha, \alpha^q, \alpha^{q^2}, \ldots, \alpha^{q^{k-1}}\}$,  are linearly independent, for  an  $(n-k)$-normal element  $\alpha$ of $\F_{q^n}$. 

Clearly, the results of Section \ref{number_bases} give lower bounds on the number of $k$-subsets of $\{\a, \a^q, \ldots, \a^{q^{n-1}}\}$ (where $\a$ is $(n-k)$-normal) which form bases over $\fq$, or equivalently, upper bounds on the number of dependent $k$-subsets. 

We assert that our association of $k$-normal elements with matroids strongly suggests that a complete and general solution to the mentioned problem may be very difficult to arrive at.

We further state the following open problem, whose solution we conjecture is positive, based on multiple computer experiments and observations.

\begin{problem}
Given a representable cyclic $k$-matroid $\mM = (\Zn,\mB)$, determine if there exists a prime power $q$ and an $(n-k)$-normal element $\a \in \fqn$ such that all the bases of the $\F_q$-span of $\a, a^q, \ldots, \a^{q^{n-1}}$, are given by the sets $\{\a^{q^{i_1}}, \a^{q^{i_2}}, \ldots, \a^{q^{i_k}}\}$ where $\{i_1, i_2, \ldots, i_k\} \in \mB$.
\end{problem}

\section{Orbits of the Cyclic Group Action}\label{sec:numberoforbits}

In this section, we study the orbits of the action of $\Z_n$ on $2^{\Z_n}$ from a general point of view. We first provide an algebraic characterization of orbits and then count the total number of orbits.

\subsection{Characterization of orbits}
We introduce here circulant matrices in order to characterize the orbits of $\varphi$. Given a subset $A \subseteq \Z_n$, we associate to $A$ the circulant matrix $$M_A = \left( m_{i,j} \right)_{i,j =0}^{n-1}, \ \mbox{where} \ m_{i,j} = \begin{cases} 1 & \mbox{if} \ (j-i) \bmod n \in A \\ 0 & \mbox{otherwise}. \end{cases}$$  
Note that the first row of $M_A$ is the incidence vector of $A$, i.e., its $j$-th entry is 1 if and only if $j \in A$. Similarly, the $i$-th row of $M_A$ is the incidence vector of $A+i$. Therefore, the orbit of $A$ has a one-to-one correspondence with the rows of $M_A$. 

An equivalent description can be obtained using the polynomials in the ring $\Z_2[x]/(x^n-1)$. Let $A \subseteq \Z_n$ and $M_A$ be the associated circulant matrix. Further, let $(c_0,c_1,\ldots,c_{n-1})$ be the first row of $M_A$. Then we associate to $A$ the polynomial $$f_A(x) = c_0 + c_1 x + \cdots + c_{n-1} x^{n-1} \in \Z_2[x]/(x^n-1).$$

It is easy to see that $f_{A+i}(x) = x^i f_A(x) \bmod (x^n-1)$ for all $i \in \Z_n$. 

\begin{lemma}
Let $A \subseteq \Z_n$ and $\Stab(A) = \langle s \rangle$ with $s \neq 0$. Then, 
\begin{enumerate}
    \item $\rk(M_A) \leq s$.
    \item $f_A(x) (x^s-1) = 0 \bmod (x^n-1)$.
\end{enumerate}
\end{lemma}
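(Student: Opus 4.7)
The plan is to derive both parts directly from the shift-invariance $A+s=A$, together with the two facts already recorded just before the lemma: the $i$-th row of $M_A$ is the incidence vector of $A+i$, and $f_{A+i}(x) \equiv x^i f_A(x) \pmod{x^n-1}$.

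For part~(1), I would first invoke the earlier observation that $\Stab(A)$ is a subgroup of $\Z_n$ whose distinguished generator divides $n$; together with $s\neq 0$ this forces $s$ to be a proper divisor of $n$, so that $|\Stab(A)| = n/s$ and the quotient $\Z_n/\Stab(A)$ has exactly $s$ cosets. Because $A+s=A$, the $i$-th and $(i+s)$-th rows of $M_A$ coincide; iterating, the $i$-th row depends only on the coset of $i$ in $\Z_n/\Stab(A)$. Hence $M_A$ has at most $s$ distinct rows, and consequently its row-rank satisfies $\rk(M_A) \leq s$.

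For part~(2), I would apply the identity $f_{A+i}(x) \equiv x^i f_A(x) \pmod{x^n-1}$ with $i=s$. Since $A+s=A$ gives $f_{A+s}(x)=f_A(x)$, we obtain $x^s f_A(x) \equiv f_A(x) \pmod{x^n-1}$, and rearranging yields exactly $(x^s-1)f_A(x) \equiv 0 \pmod{x^n-1}$ (which in characteristic $2$ is the stated identity).

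I do not expect any real obstacle: both statements unfold immediately from the translation-equivariance set up just before the lemma. The only point requiring care is the convention for $s$, namely that the stated generator is the divisor of $n$ with $|\langle s \rangle| = n/s$, so that the number of cosets of $\Stab(A)$ in $\Z_n$ is $s$ rather than $n/s$; this is what makes the bound in part~(1) come out as $s$ and not its complementary divisor.
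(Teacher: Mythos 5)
Your proposal is correct and follows essentially the paper's own argument: part (2) is identical (apply $f_{A+s}(x)=x^sf_A(x)$ with $A+s=A$), and part (1) uses the same observation that the rows of $M_A$ repeat with period $s$, which you phrase via cosets of $\Stab(A)$ while the paper phrases it as $M_A$ being tiled by the $s\times s$ block $C_A$. Your remark on the convention that the generator $s$ divides $n$ (so $\Z_n/\Stab(A)$ has exactly $s$ cosets) is exactly the point the paper relies on implicitly.
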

\begin{proof}
 \begin{enumerate}
     \item This follows from the fact that the first $s$ rows of $M_A$ repeat $|\Stab(A)|$ times. In particular, $M_A = (m_{i,j})_{i,j=0}^{n-1}$ has the following form 
      \begin{equation}
         M_A = \begin{pmatrix} C_A & \cdots & C_A \\ \vdots & \ddots & \vdots \\ C_A & \cdots & C_A \end{pmatrix},
     \end{equation} where $C_A = (m_{i,j})_{i,j=0}^{s-1}$. Thus, $\rk(M_A) = \rk(C_A) \leq s$.
     \item Since $A+s = A$, it follows that $f_A(x) = x^s f_A(x)$. Therefore, $$f_A(x) (x^s-1) = 0 \bmod (x^n-1).$$
 \end{enumerate}
\end{proof}

\begin{remark}
If $\Stab(A) = \langle s \rangle$ with $s \neq 0$, then it is not necessary that $\rk(M_A) = s$. For example, consider $A = \{0,2,3,5\} \subseteq \Z_6$. Then, $\Stab(A) = \{0,3\}$ but $\rk(M_A) = 2$.
\end{remark}

\subsection{Number of Orbits}
We now derive the total number of orbits under the action \eqref{gpaxnall}. The results obtained here are based on the observation that the block composition of the elements belonging to the same orbit have equal length and they are cyclic shift of each other.

Let us denote the set of all orbits by $\mathcal{T}$, and the set of orbits corresponding to block composition of length $r$ by $\mathcal{T}_r$. Clearly, $\mathcal{T} = \bigsqcup\limits_{r=1}^{\lfloor\frac{k}{2}\rfloor} T_r$.

Let $\mC_{m,r}$ denote the set of compositions of $m$ with length $r$. Observe that the group $\Z_r$ acts on the set $\mathcal{C}_{m,r}$ via cyclic shifts. More precisely, for $s \in \Z_r$ and $(c_1, c_2, \ldots, c_r) \in \mathcal{C}_{m,r}$, we define $s \cdot (c_1, c_2, \ldots, c_r) = (c_{s+1}, c_{s+2}, \ldots, c_r, c_1, \ldots, c_s)$.

We consider the set $\mathcal{Z}_{n,k,r} := \mathcal{C}_{k,r} \times \mathcal{C}_{n-k,r}$. The actions of $\Z_r$ on $\mathcal{C}_{k,r}$ and $\mathcal{C}_{n-k,r}$ extends in a natural way to $\mathcal{Z}_{n,k,r}$. Moreover, tuples $(c_A , c_B)$ and $(c_A^\prime , c_B^\prime)$ lie in the same orbit under this action if and only if there exists a shift $s \in \Z_r$ such that $s \cdot c_A = c_A^\prime$ and $s \cdot c_B = c_B^\prime$. Denote by $\mathcal{Z}_{n,k,r}/\Z_r$ the set of orbits under this action.

\begin{lemma} The following holds:
\begin{align*} \card{\mathcal{Z}_{n,k,r}/\Z_r} = & \frac{1}{r} \binom{k-1}{r-1} \binom{{n-k}-1}{r-1} +\\ &    \sum\limits_{s \mid \gcd(k, n-k, r)}\left(\frac{r-s}{rs}\right) \sum\limits_{s \mid \gcd(k, n-k, r)} \frac{1}{s} \binom{k/s-1}{r/s-1} \binom{{(n-k)}/s-1}{r/s-1}. \end{align*}

\end{lemma}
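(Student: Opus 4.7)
The plan is to compute $|\mathcal{Z}_{n,k,r}/\Z_r|$ by applying Burnside's (Cauchy--Frobenius) lemma to the diagonal action of $\Z_r$ on $\mathcal{Z}_{n,k,r}=\mathcal{C}_{k,r}\times\mathcal{C}_{n-k,r}$, namely
$$|\mathcal{Z}_{n,k,r}/\Z_r| \;=\; \frac{1}{r}\sum_{t\in\Z_r}|\mathrm{Fix}(t)|.$$
Thus everything reduces to counting, for each $t\in\Z_r$, the pairs of compositions fixed by the shift $t$.

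The key combinatorial input is the following: a composition $(c_1,\dots,c_r)$ of a positive integer $m$ is fixed by the cyclic shift by $t$ if and only if it is $d$-periodic for $d:=\gcd(t,r)$. Equivalently, such a composition is obtained by repeating an arbitrary composition of $md/r$ of length $d$ exactly $r/d$ times, which is possible only when $(r/d)\mid m$. Hence the number of fixed compositions of $m$ under the shift $t$ equals $\binom{md/r-1}{d-1}$ whenever $(r/d)\mid m$, and $0$ otherwise. Taking the product across the two factors,
$$|\mathrm{Fix}(t)| \;=\; \binom{kd/r-1}{d-1}\binom{(n-k)d/r-1}{d-1}$$
whenever $r/d$ divides both $k$ and $n-k$, and zero otherwise.

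Next I reorganize the Burnside sum by the value $d=\gcd(t,r)$. Standard elementary number theory gives exactly $\phi(r/d)$ elements $t\in\Z_r$ with $\gcd(t,r)=d$, where $\phi$ is Euler's totient, and the joint divisibility condition becomes $r/d\mid\gcd(k,n-k)$. After the substitution $s=r/d$, the nonzero contributions are indexed precisely by the divisors of $\gcd(k,n-k,r)$. Isolating the $s=1$ term (equivalently $t=0$) gives the leading $\tfrac{1}{r}\binom{k-1}{r-1}\binom{n-k-1}{r-1}$ of the claimed formula, while the divisors $s>1$ produce the sum over nontrivial $s\mid\gcd(k,n-k,r)$ of
$$\frac{\phi(s)}{r}\,\binom{k/s-1}{r/s-1}\binom{(n-k)/s-1}{r/s-1},$$
where the arithmetic weight in front arises from the $\phi(s)$ elements of $\Z_r$ with the given gcd together with the overall factor $1/r$ from Burnside.

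The only conceptual obstacle is essentially bookkeeping: verifying cleanly the equivalence between ``fixed by the shift $t$'' and the period condition $d\mid$ period, handling the case distinction as to when the binomial coefficients are nonzero, and then matching the weight obtained from the $\phi(s)$ count with the exact coefficient written in the statement. Beyond that, the argument is a direct application of the orbit-counting lemma to a product action whose fixed-point set factors as a product.
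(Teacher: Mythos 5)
Your Burnside computation is sound, and it is a genuinely different route from the paper's: the paper stratifies $\mathcal{Z}_{n,k,r}$ directly into full and non-full orbits and counts the periodic pairs by hand, whereas you average fixed points over $\Z_r$. Your fixed-point count is right (a pair is fixed by the shift $t$ iff both compositions are $d$-periodic with $d=\gcd(t,r)$, which forces $(r/d)\mid k$ and $(r/d)\mid (n-k)$ and gives $\binom{kd/r-1}{d-1}\binom{(n-k)d/r-1}{d-1}$ fixed pairs), and after grouping the $\phi(r/d)$ shifts with a given gcd and substituting $s=r/d$ you obtain
\begin{equation*}
\card{\mathcal{Z}_{n,k,r}/\Z_r}=\frac{1}{r}\sum_{s\mid\gcd(k,n-k,r)}\phi(s)\binom{k/s-1}{r/s-1}\binom{(n-k)/s-1}{r/s-1}.
\end{equation*}

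The gap is the final step, which you describe as ``matching the weight obtained from the $\phi(s)$ count with the exact coefficient written in the statement'' but never carry out --- and it cannot be carried out, because $\phi(s)/r\neq (r-s)/(rs)=1/s-1/r$ in general (take $s=r=2$). The two formulas genuinely disagree: for $n=8$, $k=4$, $r=2$ we have $\mathcal{C}_{4,2}=\{(1,3),(2,2),(3,1)\}$, and the diagonal $\Z_2$-action on the nine pairs has exactly $5$ orbits, which is what your expression gives, while the displayed right-hand side does not evaluate to $5$ under any reading of its (garbled) double sum. The mismatch originates in the paper's own argument, not in yours: there, a pair consisting of $s$ repetitions of blocks of length $r/s$ is asserted to have orbit size $s$ (its orbit size is $r/s$ when the repeated block-pair is primitive), elements are counted once for every divisor $s$ for which they are periodic rather than only for their exact period, and the sum over $s\mid\gcd(k,n-k,r)$ silently includes $s=1$. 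So treat your Burnside formula as the corrected statement: to complete your proposal, write out the fixed-point count and the grouping by $\gcd(t,r)$ in full, and state explicitly that the result replaces, rather than reproduces, the coefficient $(r-s)/(rs)$ appearing in the lemma; the downstream corollary should be adjusted accordingly.
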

\begin{proof}
Let  $(c_A, c_B)$ be a tuple with $c_A=(a_1, a_2, \ldots, a_r)$, $ c_B = (b_1, b_2, \ldots, b_r)$. The orbit of $(c_A, c_B)$ is of size strictly smaller than $r$ if and only if there exists an $s$ dividing $r$ such that $a_{i} = a_{(i+s)} \mod r $ and $b_{i} = b_{(i+s)} \mod r $, for each $1\leq i \leq r$. This $s$, in fact, generates the stabilizer of the element $(c_A, c_B)$ and gives the number of repeating blocks in the compositions $c_A$ and $c_B$, or equivalently, the size of the orbit of $(c_A, c_B)$. Thus, we also have $s \mid k$ and $s \mid n-k$, and thus $s \mid \gcd(r, k, n-k)$. Each (minimal) repeating block in $c_A$ (resp. $c_B$) is a composition of $k/s$ (resp. $(n-k)/s$) with length $r/s$. Conversely, given $s \mid \gcd(r, k, n-k)$ and any composition $\tilde{c_A} = (a_1, \ldots, a_{r/s})$ of $k/s$ and $\tilde{c_B}=(b_1, \ldots, b_{r/s})$ of $n-k/s$ with length $r/s$, define the tuple $(c_A,c_B)\in \mathcal{Z}_{n,k,r}$, where
\begin{align*}
    c_A&=(a_1, \ldots, a_{r/s}, a_1, \ldots, a_{r/s}, \ldots, a_1,\ldots, a_{r/s}), \\
    c_B&=(b_1, \ldots, b_{r/s}, b_1, \ldots, b_{r/s}, \ldots, b_1,\ldots, b_{r/s}).
\end{align*}
This orbit has size $s$. Thus, the number of elements of $\mathcal{Z}_{n,k,r}$ occupying orbits of size strictly smaller than $r$ is given by $$x_{n,k,r} =\sum\limits_{s \mid \gcd(k, n-k, r)} {\card{\mathcal{C}_{k/s,r/s}\times \mathcal{C}_{n-k/s,r/s}}},$$ 
and the remaining $\card{\mathcal{Z}_{n,k,r}}-x_{n,k,r}$ elements have orbits of full size $r$. We may thus calculate the total number of orbits as \begin{align*}  \card{\mathcal{Z}_{n,k,r}/\Z_r}  &=   \dfrac{ \card{\mathcal{Z}_{n,k,r}}-x_{n,k,r}}{r} + \sum\limits_{s \mid \gcd(k, n-k, r)} \dfrac{\card{\mathcal{C}_{k/s,r/s}\times \mathcal{C}_{n-k/s,r/s}}}{s}  \\  &=  \frac{1}{r} \left[\binom{k-1}{r-1} \binom{{n-k}-1}{r-1}-\sum\limits_{s \mid \gcd(k, n-k, r)} { \binom{k/s-1}{r/s-1} \binom{{(n-k)}/s-1}{r/s-1}} \right]  \\ &+ \sum\limits_{s \mid \gcd(k, n-k, r)} \frac{1}{s} \binom{k/s-1}{r/s-1} \binom{{(n-k)}/s-1}{r/s-1} \\ &=\frac{1}{r} \binom{k-1}{r-1} \binom{{n-k}-1}{r-1} \\&+  \sum\limits_{s \mid \gcd(k, n-k, r)}\left(\frac{r-s}{rs}\right) \sum\limits_{s \mid \gcd(k, n-k, r)} \frac{1}{s} \binom{k/s-1}{r/s-1} \binom{{(n-k)}/s-1}{r/s-1}. \end{align*}
\end{proof}

\begin{theorem}
There is a bijection between the set $\mathcal{T}_r$ and $\mathcal{Z}_{n,k,r}/\Z_r$.
\end{theorem}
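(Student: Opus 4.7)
The plan is to construct mutually inverse maps between $\mathcal{T}_r$ and $\mathcal{Z}_{n,k,r}/\Z_r$. The key observation is that for a set $A \subseteq \Z_n$ of size $k$ whose consecutive block structure $\pi(A) = (D_1, \ldots, D_r)$ has length $r$, the complement $A^C$ has a consecutive block structure of the same length $r$, since its maximal blocks fill the $r$ gaps between the blocks of $A$ (cyclically). Denote by $G_i$ the block of $A^C$ immediately following $D_i$. Then $(|D_1|, \ldots, |D_r|) \in \mathcal{C}_{k,r}$ and $(|G_1|, \ldots, |G_r|) \in \mathcal{C}_{n-k,r}$, so their pair lies in $\mathcal{Z}_{n,k,r}$.

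I would define the forward map $\Phi: \mathcal{T}_r \to \mathcal{Z}_{n,k,r}/\Z_r$ by sending an orbit $[A]$ to the $\Z_r$-orbit of the tuple $\bigl((|D_1|, \ldots, |D_r|), (|G_1|, \ldots, |G_r|)\bigr)$. To show $\Phi$ is well-defined, I would argue that (i) replacing $A$ by a translate $A+s$ cyclically permutes the block sequence $(D_1,\ldots,D_r)$ and the interleaved gap sequence $(G_1,\ldots,G_r)$ by the same amount, so the new tuple differs from the old one by a single element of $\Z_r$; (ii) the convention $d_1 < d_2 < \cdots < d_r$ imposed by the definition of $\pi(A)$ is merely a choice of starting point in the cyclic list of blocks, and any other choice corresponds to a $\Z_r$-shift of the tuple.

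The inverse map $\Psi: \mathcal{Z}_{n,k,r}/\Z_r \to \mathcal{T}_r$ would be defined on a representative tuple $\bigl((a_1, \ldots, a_r), (b_1, \ldots, b_r)\bigr)$ by building
\[ A \;=\; \bigcup_{i=1}^{r} \{\sigma_i, \sigma_i + 1, \ldots, \sigma_i + a_i - 1\}, \qquad \sigma_1 = 0, \quad \sigma_{i+1} = \sigma_i + a_i + b_i. \]
By construction $|A| = a_1 + \cdots + a_r = k$, the set $A$ has exactly $r$ consecutive blocks of sizes $a_1, \ldots, a_r$, and the gaps between them (cyclically) have sizes $b_1, \ldots, b_r$, so $[A] \in \mathcal{T}_r$. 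I would verify that applying a cyclic shift $s \in \Z_r$ to the input tuple produces a set that is a $\Z_n$-translate of $A$: the new starting positions are just $\sigma_{s+1}, \sigma_{s+2}, \ldots$, which differ from the old ones by the common constant $\sigma_{s+1}$. Hence $\Psi$ descends to orbits.

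Finally, I would check $\Psi \circ \Phi = \mathrm{id}$ and $\Phi \circ \Psi = \mathrm{id}$, both of which are immediate from the definitions: starting from a tuple, building $A$ via $\Psi$, and then reading off the block and gap lengths recovers the original tuple exactly; and starting from $A$, the set reconstructed from its block and gap sizes is a $\Z_n$-translate of $A$. The only subtle point, and thus the main obstacle to address with care, is reconciling the ordering convention of $\pi(A)$ (which picks one particular representative in the cyclic list of blocks) with the cyclic nature of the combinatorial data. This subtlety is exactly what is absorbed by passing to the quotient $\mathcal{Z}_{n,k,r}/\Z_r$, and once this is spelled out the bijection follows.
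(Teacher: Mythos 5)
Your proposal is correct and takes essentially the same approach as the paper: encode an orbit in $\mathcal{T}_r$ by the pair (block composition, gap composition) modulo the simultaneous $\Z_r$-shift, and reconstruct a set from such a pair by laying the blocks and gaps out alternately starting at $0$. Your convention of pairing each block $D_i$ with the gap $G_i$ immediately following it is exactly the interleaving the paper implicitly uses in its reconstruction formulas, and your explicit two-sided inverse simply replaces the paper's separate well-definedness, injectivity and surjectivity checks.
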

\begin{proof}
We define a map $\psi: \mathcal{T}_r \rightarrow \mathcal{Z}_{n,k,r}/\Z_r$ as follows. Consider an element $D \in \mathcal{T}_r$ with consecutive block structure $\pi(D)=(D_1, D_2, \ldots, D_r)$. Write $D_i=\{d_i, d_i+1, \ldots, d_i+\card{D_i}-1\}$ for $2 \leq i \leq r $, $\sum\limits_{i=1}^r \card{D_i} = k$. We have $d_1 <d_2 < \ldots < d_r$.  

We define $\psi(D)$ as the orbit defined by $(c(D),c(D^C))$  in $\mathcal{Z}_{n,k, r}/\Z_r$.

$\psi$ is well-defined:

To see this, let $l \in \Z_n$ be a shift and write $D^\prime  = D+l$. We need to show that $\psi(D) = \psi(D^\prime)$. We have $D^\prime =(D_1+l) \cup (D_2+l) \cup \ldots (D_r+l) \in \mathcal{T}$, with $D_i+l=\{d_i+l, d_i+l+1, \ldots, d_i+l+\card{D_i}-1\}$ for $2 \leq i \leq r $, $\sum\limits_{i=1}^r \card{D_i+l} = k$. Now let $1\leq s \leq r$ be the largest index such that $d_s+l \leq n$. Then since $l<n$, we have $d_j+l-n < d_1+l$ for every $s <j \leq r$, and the arrangement $D^\prime = (D_{s+1} +l) \cup (D_{s+2}+l) \cup (D_{r}+l)\cup (D_1+l)\cup \ldots \cup (D_{s}+l)$ is such that the first elements of the blocks are in ascending order. Further, $D^C = \{d_{s+1}+ c_{s+1}+l-n, \ldots, d_{s+2}+l-n-1\} \cup \ldots \cup \{d_{r}+ c_r+l-n, \ldots, d_1+l-1\}\cup \ldots \cup  \{d_{s}+ c_s+l, \ldots, d_{s+1}+l-1\}  $. Thus, by the definition of $\psi$ we have $\psi(D^\prime) = (\bar{c}, \bar{c}^\prime)$ where $\bar{c} = (c_{s+1}, \ldots, c_{r}, c_1, c_2, \ldots, c_s)$ and $\bar{c}^\prime = (c_{s+1}^\prime , \ldots, c_{r}^\prime, c_1^\prime, c_2^\prime, \ldots, c_s^\prime)$. Now, it is clear that $\psi(\bar{D}) = s \cdot \psi (D)$, and they both thus define the same orbit in $\mathcal{Z}_{n,k,r}$.

$\psi$ is injective: Let $\psi(D) = (c,c^\prime)$, with $c= (c_1, c_2, \ldots, c_r)$, $c= (c_1^\prime, c_2^\prime, \ldots, c_r^\prime)$. Since $\psi(\bar{D})$ and $\psi(D)$ represent the same orbit, the tuple corresponding to $\bar{D}$ is of the form $(s\cdot c, s \cdot c^\prime)$ for some $s \in \Z_r$. Denote by $d_1$ and $\bar{d_1}$ the smallest elements of $D$ and $\bar{D}$ respectively. Then, using the definition of $\psi$ we can reconstruct $D$ and $\bar{D}$:
\[D = \{d_1, \ldots, d_1+c_1-1\}\cup \ldots  \cup  \{d_1+\sum\limits_{i=1}^{r-1} (c_i+c_i^\prime) \ldots, d_1+\sum\limits_{i=1}^{r-1} (c_i+c_i^\prime)+c_r-1\}\]
where the $j^{th}$ block is given by $$D_j=\{d_1+\sum\limits_{i=1}^{j-1} (c_i+c_i^\prime), \ldots, d_1+\sum\limits_{i=1}^{j-1} (c_i+c_i^\prime)+c_j-1\}.$$
We have,
\begin{align*} \bar{D} = & \{ \bar{d}_1, \ldots, d_1+c_{s+1}-1\}\cup \ldots  \cup \\ &\{\bar{d}_1+\sum\limits_{i=s+1}^{r-1} (c_i+c_i^\prime) \ldots, d_1+\sum\limits_{i=s+1}^{r-1} (c_i+c_i^\prime)+c_r-1\} \cup \\ &\{\bar{d}_1 + \sum\limits_{i=s+1}^{r} (c_i+c_i^\prime), \ldots, \bar{d}_1 + \sum\limits_{i=s+1}^{r} (c_i+c_i^\prime)+c_1-1\} \cup \ldots   \\ & \cup \{\bar{d}_1 +  \sum\limits_{i=s+1}^{r} (c_i+c_i^\prime) + \sum\limits_{i=1}^{s-1} (c_i+c_i^\prime) ,\ldots, \bar{d}_1 +  \sum\limits_{i=1}^{r} (c_i+c_i^\prime) -1 \}.\end{align*}

Finally, using the fact that $\sum\limits_{i=1}^{r} (c_i+c_i^\prime) = n$, we have \begin{align*} \bar{D} +d_1-\bar{d_1}-\sum\limits_{i=s+1}^{r} (c_i+c_i^\prime) = & \{d_1+\sum\limits_{i=1}^{s} (c_i+c_i^\prime) \ldots, d_1+\sum\limits_{i=1}^{s} (c_i+c_i^\prime)+c_{s+1}-1\} \cup \ldots \\ & \cup \{d_1 + \sum\limits_{i=1}^{r-1} (c_i + c_i^\prime), \ldots, d_1+\sum\limits_{i=1}^{r-1}(c_i + c_i^\prime)+c_r-1\} \\ & \cup \{d_1, \ldots, d_1+c_1-1\}\cup \ldots \\ &  \cup  \{d_1+\sum\limits_{i=1}^{r-1} (c_i+c_i^\prime) \ldots, d_1+\sum\limits_{i=1}^{r-1} (c_i+c_i^\prime)+c_r-1\}  \\  = & D_{s+1} \cup \ldots D_r \cup D_1 \cup \ldots \cup D_s = D. \end{align*}

Thus, $D$ and $\bar{D}$ are shifts of each other, i.e. lie in the same orbit, and thus $\psi$ is injective.

$\psi$ is surjective: Given $(c,c^\prime) \in  \mathcal{Z}_{n,k,r}$, write, as before, $c=(c_1, c_2 ,\ldots, c_r)$ and $c^\prime=(c_1^\prime, c_2^\prime ,\ldots, c_r^\prime) $ define \begin{align*}\label{preimage} \small{ D :=\{0, 1, \ldots, c_1-1 \} \cup \{c_1+c_1^\prime, \ldots, c_1+c_1^\prime+c_2-1\} \cup \ldots \cup \left\{\sum\limits_{i=1}^{r-1} (c_i+c_i^\prime), \ldots, n-c_r^\prime-1  \right\} \in \mathcal{T}.}\end{align*} Note that this can always be done since $r \leq \lfloor \frac{k}{2} \rfloor $. We have, $D^C = \{c_1, c_1+1, \ldots, c_1+c_1^\prime-1\}\cup \{ c_1+c_1^\prime+c_2^\prime,  \ldots, c_1+c_1^\prime+c_2+c_2^\prime-1\} \cup \ldots \cup \left\{\sum\limits_{i=1}^{r-1} (c_i+c_i^\prime)+c_{r} , \ldots, n-1\right\}$. It is now clear that $\psi(D) = (c,c^\prime)$. This completes the proof.
\end{proof}

\begin{corollary} There is a bijection between the set $\mathcal{T}$ and $\bigsqcup\limits_{r=1}^{\lfloor\frac{k}{2}\rfloor}\mathcal{Z}_{n,k}$. Hence, the number of orbits under the action \ref{gpaxnall} is given by
\begin{align*}\card{\mathcal{Z}_{n,k} } = & \sum\limits_{1\leq r \leq \lfloor \frac{k}{2} \rfloor} \ \left[\sum\limits_{s \mid \gcd(k, n-k, r)} \dfrac{\card{\mathcal{C}_{k/s,r/s}\times \mathcal{C}_{n-k/s,r/s}}}{s} + \sum\limits_{s \nmid \gcd(k,n-k, r)} \dfrac{ \card{\mathcal{C}_{k,r} \times \mathcal{C}_{n-k,r}}}{r} \right]\\ = & \sum\limits_{1\leq r \leq \lfloor \frac{k}{2} \rfloor} \ \left[ \sum\limits_{s \mid \gcd(k, n-k, r)} \frac{1}{s} \binom{k/s-1}{r/s-1} \binom{{(n-k)}/s-1}{r/s-1} + \sum\limits_{s \nmid \gcd(k,n-k, r)} \frac{1}{r} \binom{k-1}{r-1} \binom{{n-k}-1}{r-1}  \right].\end{align*}
\end{corollary}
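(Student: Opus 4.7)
The plan is to observe that this corollary follows almost immediately by gluing together the bijections from the preceding theorem, applied to each block-count $r$, and then summing the counting formula from the preceding lemma.

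First, I would justify that $\mathcal{T}$ admits the partition $\mathcal{T} = \bigsqcup_{r=1}^{\lfloor k/2 \rfloor}\mathcal{T}_r$ used in the statement. The point is that the block composition $c(A)$ is a conjugacy invariant of the $\Z_n$-orbit of $A$: if $A' = s + A$, then $\pi(A')$ is a cyclic reordering of $\pi(A)$ and so $c(A')$ is a cyclic shift of $c(A)$. In particular, the \emph{length} of $c(A)$ depends only on the orbit of $A$, and therefore assigning to each orbit the length $r$ of a representative block composition partitions $\mathcal{T}$ into the blocks $\mathcal{T}_r$.

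Next, the previous theorem furnishes, for every $r$ with $1 \leq r \leq \lfloor k/2 \rfloor$, a bijection $\psi_r : \mathcal{T}_r \longrightarrow \mathcal{Z}_{n,k,r}/\Z_r$. Gluing these bijections together along the partition of $\mathcal{T}$ yields the claimed bijection
\[
\psi : \mathcal{T} \;\longrightarrow\; \bigsqcup_{r=1}^{\lfloor k/2 \rfloor} \mathcal{Z}_{n,k,r}/\Z_r.
\]
In particular, $\card{\mathcal{T}} = \sum_{r=1}^{\lfloor k/2 \rfloor} \card{\mathcal{Z}_{n,k,r}/\Z_r}$.

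Finally, I would substitute the closed form for $\card{\mathcal{Z}_{n,k,r}/\Z_r}$ obtained in the previous lemma. Recall that there, $\mathcal{Z}_{n,k,r}$ is split into elements whose stabilizer under the $\Z_r$-action is trivial (contributing orbits of full size $r$, counted by dividing by $r$) and elements whose stabilizer is generated by some $s\mid\gcd(k,n-k,r)$ (contributing orbits of size $s$, counted by dividing by $s$ and parametrized by pairs of compositions of length $r/s$). Substituting and then interchanging the sums over $r$ and over $s$ gives the two summands displayed in the statement. The only mildly delicate bookkeeping is keeping track of which divisors $s$ contribute to which term; this is handled exactly as in the proof of the preceding lemma, where the count $|\mathcal{Z}_{n,k,r}|$ was split as $x_{n,k,r} + (|\mathcal{Z}_{n,k,r}| - x_{n,k,r})$. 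No genuinely new argument is required — the main (and only) obstacle is making sure the double sum in the final formula is consistent with the per-$r$ count produced by the lemma.
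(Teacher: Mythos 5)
Your proposal is correct and matches the paper's (implicit) argument exactly: the corollary is obtained by partitioning $\mathcal{T}$ into the $\mathcal{T}_r$ by block-composition length, applying the theorem's bijection $\mathcal{T}_r \to \mathcal{Z}_{n,k,r}/\Z_r$ for each $r$, and summing the count from the preceding lemma. One cosmetic remark: no interchange of the sums over $r$ and $s$ is actually needed, since the stated formula keeps the sum over $r$ outermost, exactly as produced by term-by-term substitution.
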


\bigskip
\section*{Acknowledgements}

The work of Gianira N. Alfarano is supported by the Swiss National Science Foundation under grant n. 188430. The work of Simran Tinani is supported by armasuisse Science and Technology. The work of Karan Khathuria is supported by the Estonian Research Council grant PRG49.

\bibliographystyle{abbrv} 
 \bibliography{references.bib}
\newpage
\appendix

\section{Explicit Cyclic $k$-Matroids} \label{appendix:examples}

Computer search was used to find examples of cyclic $k$-matroids different from the uniform matroid. Due to the randomized nature of the algorithm used, the matroids obtained were all quite close to the uniform matroid, usually missing one or two cyclic orbits. We list the orbit representatives of the basis set for a few select cases. Note that for given $n$ and $k$, the listed matroid need not be the only non-uniform cyclic matroid. The working code for the generation algorithm, as well as for the bound calculations in Section \ref{sec:expt} can be found at \url{https://github.com/simran-tinani/Cyclic-matroids}.
\\

\begin{table}[H]
 \begin{tabular}{@{}|c|c|c|@{}}

\toprule
$n$ & $k$ & Basis Orbit Representatives \\ \midrule

6 & 3 & $\{0,1,2\}, \{0, 2, 4\}$  \\ \hline

6 & 4 & $\{0, 1, 2, 4\}, \{0, 1, 2, 3\}$ \\ \hline

7 & 3 & $\{0, 1, 4\}, \{0, 1, 2\}, \{1, 3, 6\}, \{1, 5, 6\}$ \\ \hline

9& 3 & $\{2, 5, 7\},
 \{0, 1, 2\},
 \{0, 3, 8\},
 \{1, 7, 8\},
 \{1, 4, 5\},
 \{0, 6, 8\},
 \{0, 5, 7\},
 \{2, 4, 7\},
 \{1, 5, 6\}$\\ \hline

9 & 4 & \begin{tabular}{@{}c@{}}$\{3, 4, 6, 8\},
 \{1, 3, 4, 5\}, \{0, 3, 5, 6\}, \{0, 5, 6, 8\}, \{1, 2, 3, 6\},  \{0, 1, 2, 6\}$ \\ $\{0, 2, 5, 8\}, \{0, 1, 2, 3\}, \{2, 3, 6, 7\},  \{2, 4, 6, 7\}, \{0, 3, 5, 7\}, \{3, 4, 5, 7\}, \{1, 5, 7, 8\}$ \end{tabular} \\ \hline

 10 & 6 & \begin{tabular}{@{}c@{}}
     $\{0, 1, 2, 4, 7, 9\}, \{2, 3, 6, 7, 8, 9\}, \{1, 2, 3, 5, 6, 7\}, \{0, 1, 2, 4, 8, 9\}$\\ $\{1, 3, 4, 5, 6, 9\}, \{0, 1, 3, 4, 6, 8\},\{2, 3, 4, 7, 8, 9\},
  \{2, 3, 4, 6, 7, 9\}$\\ $\{1, 2, 3, 4, 5, 8\}, \{0, 2, 3, 6, 7, 8\}, \{0, 2, 5, 6, 8, 9\}, \{1, 2, 4, 5, 6, 7\},$\\ $\{2, 3, 4, 5, 7, 8\},
  \{0, 1, 2, 3, 4, 5\}, \{0, 1, 3, 5, 8, 9\}, \{0, 1, 2, 3, 7, 9\},$\\ $\{1, 3, 4, 6, 8, 9\}, \{0, 1, 3, 5, 6, 7\}, \{1, 2, 3, 6, 8, 9\},$\\ $\{1, 2, 3, 5, 6, 9\}, \{0, 2, 3, 6, 7, 9\}, \{0, 1, 2, 4, 6, 8\}$
  \end{tabular} \\ \hline

11 & 4 & \begin{tabular}{@{}c@{}}$\{0, 3, 5, 10\},
 \{3, 5, 7, 8\},
 \{0, 1, 5, 9\},
 \{1, 4, 6, 9\},
 \{2, 6, 9, 10\},
 \{1, 6, 7, 9\}$\\
 $\{0, 5, 8, 9\},
 \{0, 5, 7, 10\},
 \{0, 3, 4, 8\},
 \{0, 2, 8, 9\},
 \{5, 6, 7, 10\},
 \{3, 5, 7, 10\}$\\
 $\{0, 4, 5, 9\},
 \{2, 6, 7, 9\},
 \{0, 1, 2, 9\},
 \{1, 3, 8, 10\},
 \{0, 4, 5, 6\},
 \{2, 3, 8, 9\}$\\
 $\{0, 1, 2, 6\},
 \{1, 2, 7, 10\},
 \{0, 1, 2, 3\},
 \{2, 3, 6, 9\},
 \{2, 3, 6, 7\},
 \{0, 2, 3, 10\}$\\
 $\{2, 4, 5, 8\},
 \{0, 1, 3, 9\},
 \{1, 2, 3, 9\},
 \{0, 1, 2, 4\},
 \{2, 4, 8, 10\}$
\end{tabular} \\ \hline
 
13& 3 & \begin{tabular}{@{}c@{}} $\{3, 4, 7\},
 \{0, 1, 2\},
 \{4, 11, 12\},
 \{0, 4, 12\},
 \{7, 10, 12\},
 \{0, 6, 12\}$\\
 $\{3, 7, 9\},
 \{4, 8, 11\},
 \{2, 3, 10\},
 \{5, 7, 8\},
 \{0, 7, 9\},
 \{6, 8, 11\}$\\
 $\{0, 6, 8\},
 \{3, 7, 12\},
 \{2, 5, 6\},
 \{4, 6, 12\},
 \{7, 8, 10\},
 \{7, 9, 11\}$\\
 $\{1, 6, 11\},
 \{1, 5, 6\},
 \{1, 8, 11\}$ 
 \end{tabular} \\ \hline
 
15 & 3 & \begin{tabular}{@{}c@{}} $\{0, 1, 2\},
 \{3, 12, 13\},
 \{0, 4, 12\},
 \{5, 13, 14\},
 \{9, 10, 13\},
 \{7, 10, 13\},$\\
 $\{3, 7, 9\},
 \{4, 8, 11\},
 \{6, 11, 14\},
 \{1, 5, 12\},
 \{4, 9, 10\},
 \{3, 5, 13\},$\\
 $\{0, 2, 7\},
 \{5, 6, 14\},
 \{2, 3, 10\},
 \{5, 7, 8\},
 \{0, 7, 9\},
 \{6, 8, 11\}$,\\
 $\{0, 1, 12\},
 \{2, 6, 12\},
 \{0, 6, 8\}
 \{3, 9, 12\},
 \{7, 9, 11\},
 \{3, 13, 14\}$,\\
 $\{7, 8, 10\},
 \{2, 11, 13\},
 \{2, 4, 14\},
 \{1, 6, 11\},
 \{1, 5, 6\},
 \{1, 8, 11\}$\end{tabular} \\
\bottomrule
\end{tabular} 
\end{table}

\end{document}